\newtheorem{lem}{Lemma}
\newtheorem{thm}{Theorem}
\newtheorem{prop}{Proposition}
\begin{document}

\title[Liouville Systems and Constant Mean Curvature equations]{Wave Equations Associated to Liouville Systems and Constant Mean Curvature equations}
\author{Sagun Chanillo}
\address{Department of Mathematics, Rutgers University, NJ 08854}
\email{chanillo@math.rutgers.edu}
\author{Po-Lam Yung}
\address{Department of Mathematics, Rutgers University, NJ 08854}
\email{pyung@math.rutgers.edu}

\thanks{S.C. is supported by NSF grant DMS-0855541. We would like to thank Carlos Kenig and Andrea Malchiodi for encouragement, interest in this work and useful discussions.}

\maketitle

\section{Introduction} \label{sect:intro}

In this paper we study two types of wave equations, whose elliptic parts exhibit conformal invariance in $\mathbb{R}^2$. 

In the first part, we study the wave equation 
\begin{equation} \label{eq:scalarLiou}
\partial_t^2 u - \Delta_g u = \alpha \left(\frac{e^{2u}}{\fint_{\mathbb{S}^2} e^{2u}} - 1\right)
\end{equation}
on $\mathbb{S}^2$, where $\Delta_g$ denotes the (negative semidefinite) Laplace operator with respect to the standard round metric $g$ on $\mathbb{S}^2$, $\fint_{\mathbb{S}^2}$ is a shorthand for the average $\frac{1}{4\pi} \int_{\mathbb{S}^2} d\text{vol}_g$ with respect to the standard surface measure, and $\alpha$ is some real constant. The stationary (elliptic) analog of this equation is closely connected to the prescribed Gaussian curvature equation in conformal geometry, or Liouville's equation in mathematical physics. In fact, when $\alpha = 1$, if $u$ solves the equation 
\begin{equation} \label{eq:presK}
- \Delta_g u = e^{2u}- 1
\end{equation}
with $\fint_{\mathbb{S}^2} e^{2u} = 1$, then the metric $e^{2u} g$ is another metric on $\mathbb{S}^2$, conformal to $g$, that has Gaussian curvature equal to 1 everywhere, and that has area equal to $4\pi$. Via the stereographic projection that identifies $\mathbb{S}^2$ minus a point with $\mathbb{R}^2$, (\ref{eq:presK}) can also be written as  $-\Delta u = e^{2u}$ on $\mathbb{R}^2$, where now $\Delta$ is the standard Laplacian on $\mathbb{R}^2$. If one replaces $u$ by $u/2 + \log 2$, this becomes Liouville's equation on $\mathbb{R}^2$, namely
\begin{equation} \label{eq:Liou}
-\Delta u = e^u.
\end{equation}
The above stationary equations have been studied by many authors; see for instance the work of Aubin \cite{MR534672}, Chang-Yang \cite{MR925123} and Han \cite{MR1084455} on equation (\ref{eq:presK}), and work of Liouville \cite{MRLiouville}, Chanillo-Kiessling \cite{MR1361515} Chen-Li \cite{MR1121147} and Chou-Wan \cite{MR1262297} on equation (\ref{eq:Liou}).

Our first theorem is the following local existence result:

\begin{thm} \label{thm:localexistence}
Suppose $\alpha \in \mathbb{R}$. For any $u_0 \in \dot{H}^1(\mathbb{S}^2)$ and $u_1 \in L^2(\mathbb{S}^2)$ that satisfy $\int_{\mathbb{S}^2} u_1 = 0$, there exists $u \colon [0,T] \times \mathbb{S}^2 \to \mathbb{R}^N$ in $C^0_t \dot{H}^1_x \cap C^1_t L^2_x$ that solves (\ref{eq:scalarLiou})
with initial data $u(0) = u_0$, $\partial_t u(0)=u_1$, where $T > 0$ depends only on $A$, $\|u_0\|_{\dot{H}^1}$ and $\|u_1\|_{L^2}$. Furthermore, 
$$
\int_{\mathbb{S}^2} u(t) = \int_{\mathbb{S}^2} u_0 
$$
for all $t \in [0,T]$.
\end{thm}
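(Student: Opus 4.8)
The plan is to solve \eqref{eq:scalarLiou} by a contraction-mapping argument at the energy level, the essential analytic input being that in two dimensions the exponential nonlinearity is ``energy subcritical'' in the strong sense supplied by the Moser--Trudinger inequality on $\mathbb{S}^2$. First I would reduce to the mean-zero component. Since $\int_{\mathbb{S}^2}\Delta_g u = 0$ and $\int_{\mathbb{S}^2}\bigl(\tfrac{e^{2u}}{\fint_{\mathbb{S}^2}e^{2u}}-1\bigr)=0$, one has $\partial_t^2\int_{\mathbb{S}^2}u\equiv 0$, which together with $\int_{\mathbb{S}^2}u_1=0$ already gives $\int_{\mathbb{S}^2}u(t)=\int_{\mathbb{S}^2}u_0$ for all $t$, i.e.\ the last assertion of the theorem. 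Moreover the right-hand side of \eqref{eq:scalarLiou} is invariant under $u\mapsto u+c$, so writing $u=\bar u_0+v$ with $\bar u_0=\fint_{\mathbb{S}^2}u_0$ and $v$ mean zero, it suffices to solve $\partial_t^2 v-\Delta_g v=F(v)$, where $F(v):=\alpha\bigl(\tfrac{e^{2v}}{\fint_{\mathbb{S}^2}e^{2v}}-1\bigr)$, with mean-zero data $v(0)=u_0-\bar u_0\in\dot H^1$, $\partial_t v(0)=u_1\in L^2$. On the mean-zero subspace (which is preserved by the flow because $F(v)$ has mean zero), $-\Delta_g$ is invertible, so $\cos(t\sqrt{-\Delta_g})$ and $\tfrac{\sin(t\sqrt{-\Delta_g})}{\sqrt{-\Delta_g}}$ are well defined and I recast the problem as the fixed-point equation $v=\Phi(v)$ with
\begin{equation*}
\Phi(v)(t)=\cos(t\sqrt{-\Delta_g})v(0)+\tfrac{\sin(t\sqrt{-\Delta_g})}{\sqrt{-\Delta_g}}\partial_t v(0)+\int_0^t\tfrac{\sin((t-s)\sqrt{-\Delta_g})}{\sqrt{-\Delta_g}}F(v(s))\,ds,
\end{equation*}
on the Banach space $X_T=C^0_t\dot H^1_x\cap C^1_t L^2_x$ of mean-zero functions on $[0,T]\times\mathbb{S}^2$, with $\|v\|_{X_T}=\sup_{0\le t\le T}\bigl(\|v(t)\|_{\dot H^1}+\|\partial_t v(t)\|_{L^2}\bigr)$.

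Two estimates are then needed. The first is the standard energy estimate for the wave equation: the free evolution is bounded on $\dot H^1\times L^2$ uniformly in $t$, and $\bigl\|\int_0^{\cdot}\tfrac{\sin((\cdot-s)\sqrt{-\Delta_g})}{\sqrt{-\Delta_g}}G(s)\,ds\bigr\|_{X_T}\lesssim\int_0^T\|G(s)\|_{L^2}\,ds$; both follow by differentiating $\|\nabla v(t)\|_{L^2}^2+\|\partial_t v(t)\|_{L^2}^2$ in $t$. The second, and the heart of the matter, is the pair of nonlinear bounds
\begin{align*}
\|F(v)\|_{L^2}&\le C(R),\\
\|F(v)-F(w)\|_{L^2}&\le C(R)\,\|v-w\|_{\dot H^1},
\end{align*}
for all mean-zero $v,w$ with $\|v\|_{\dot H^1},\|w\|_{\dot H^1}\le R$. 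These follow from the Moser--Trudinger (Onofri-type) inequality on $\mathbb{S}^2$, which yields for every $q<\infty$ a bound $\|e^{2w}\|_{L^q(\mathbb{S}^2)}\le C(q,R)$ for mean-zero $w$ with $\|w\|_{\dot H^1}\le R$ (because $\log\int_{\mathbb{S}^2}e^{\beta w}$ is controlled by $\beta^2\|\nabla w\|_{L^2}^2$ when $\fint w=0$), together with the elementary lower bound $\fint_{\mathbb{S}^2}e^{2v}\ge e^{2\fint v}=1$ from Jensen. For the Lipschitz bound one writes $e^{2v}-e^{2w}=2(v-w)\int_0^1 e^{2(sv+(1-s)w)}\,ds$, estimates by H\"older with a Sobolev exponent, $\|v-w\|_{L^p}\lesssim\|v-w\|_{\dot H^1}$, and Moser--Trudinger for the $L^q$ norm of the exponential factor (a convex combination of mean-zero functions is mean zero with $\dot H^1$ norm $\le R$), and controls the difference of normalizing denominators $\tfrac{1}{\fint e^{2v}}-\tfrac{1}{\fint e^{2w}}$ by the same inequalities, the denominators lying in $[1,C(R)]$.

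To close the argument, fix $R:=2C_0(\|u_0\|_{\dot H^1}+\|u_1\|_{L^2})$ with $C_0$ the energy-estimate constant. Then for $\|v\|_{X_T}\le R$ one has $\|\Phi(v)\|_{X_T}\le\tfrac12 R+C_0\,T\,C(R)$, and for $\|v\|_{X_T},\|w\|_{X_T}\le R$ one has $\|\Phi(v)-\Phi(w)\|_{X_T}\le C_0\,T\,C(R)\,\|v-w\|_{X_T}$; here $C(R)$ carries the $\alpha$-dependence. Choosing $T>0$ small enough, depending only on $\alpha$, $\|u_0\|_{\dot H^1}$ and $\|u_1\|_{L^2}$, so that $C_0\,T\,C(R)\le\min\{\tfrac12 R,\tfrac12\}$, the map $\Phi$ is a contraction of the closed ball of radius $R$ in $X_T$ into itself, and its fixed point $v$ solves the reduced equation; then $u:=\bar u_0+v\in C^0_t\dot H^1_x\cap C^1_t L^2_x$ solves \eqref{eq:scalarLiou} with the prescribed data, while the conservation of the mean was verified at the outset. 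The only genuinely nontrivial point is the nonlinear estimate above: one must extract from Moser--Trudinger a Lipschitz bound for $v\mapsto F(v)$ from $\dot H^1$ to $L^2$ with constant depending only on the size of the data, keeping the normalizing factor $\fint_{\mathbb{S}^2}e^{2v}$ uniformly bounded away from $0$ and $\infty$; the energy estimate, the fixed-point mechanism, and the bookkeeping of constants are routine.
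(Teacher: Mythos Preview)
Your proposal is correct and follows essentially the same route as the paper: a contraction-mapping argument in $C^0_t\dot H^1_x\cap C^1_tL^2_x$ using the Duhamel formula (Proposition~\ref{prop:wave}), with Moser--Trudinger supplying $L^q$ bounds on $e^{2v}$, Jensen bounding the normalizing denominator from below, and the Lipschitz estimate for $F$ obtained by H\"older plus Sobolev on the difference $v-w$ combined with Moser--Trudinger on the exponential factor. The only cosmetic difference is that you subtract the mean first and work in the mean-zero subspace, whereas the paper works on the affine slice $\fint u(s)=I$ and carries the constant $e^{2I}$ through the estimates; the paper also routes the scalar result through the system version (Theorem~\ref{thm:localexistencesystem}), but the underlying argument is identical.
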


Here and in the sequel, $\int_{\mathbb{S}^2}$ refers to integrals with respect to the standard surface measure $d\text{vol}_g$, and $$\|u_0\|_{\dot{H}^1}^2 = \int_{\mathbb{S}^2} |\nabla u_0|^2, \qquad \|u_1\|_{L^2}^2 = \int_{\mathbb{S}^2} |u_1|^2,$$ where $\nabla$ is the gradient with respect to the standard round metric $g$.

In fact, it also follows from the proof of Theorem~\ref{thm:localexistence} that the initial value problem for (\ref{eq:scalarLiou}) is locally well-posed in $\dot{H}^1 \times L^2$, and that the energy 
$$
E(u(t)) = \fint_{\mathbb{S}^2} \left(|\partial_t u|^2 + |\nabla u|^2 \right) - \alpha \log \left(\fint_{\mathbb{S}^2} e^{2(u-\bar{u})} \right)
$$
of the solution $u(t,x)$ is conserved as $t$ varies as long as the solution exists. 

Next we have the following result for global existence:

\begin{thm} \label{thm:globalexistence}
The solution $u(t,x)$ in Theorem~\ref{thm:localexistence} exists for all time if $\alpha < 1$.
\end{thm}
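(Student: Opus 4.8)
The plan is to prove global existence from the local existence result of Theorem~\ref{thm:localexistence} by a standard continuation argument: a solution in $C^0_t \dot{H}^1_x \cap C^1_t L^2_x$ extends past time $T$ unless $\|u(t)\|_{\dot{H}^1} + \|\partial_t u(t)\|_{L^2}$ blows up as $t \to T$, because the local existence time depends only on $\alpha$ and these norms (and on the conserved quantity $\int_{\mathbb{S}^2} u$, which controls $\bar u$). So it suffices to obtain an a priori bound on the energy-type quantity $\|\partial_t u(t)\|_{L^2}^2 + \|\nabla u(t)\|_{L^2}^2$ on any finite time interval. The conserved energy $E(u(t))$ gives
\[
\fint_{\mathbb{S}^2} |\partial_t u(t)|^2 + \fint_{\mathbb{S}^2} |\nabla u(t)|^2 = E(u_0) + \alpha \log\left( \fint_{\mathbb{S}^2} e^{2(u(t) - \bar u(t))} \right),
\]
so everything comes down to controlling $\log \fint_{\mathbb{S}^2} e^{2(u-\bar u)}$ from above in terms of $\|\nabla u\|_{L^2}^2$ — and, crucially, with a constant strictly smaller than the coefficient we can absorb.

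**The key ingredient: Moser--Trudinger.** The essential tool is the Moser--Trudinger inequality on $\mathbb{S}^2$: there is a constant $C$ (depending only on $\mathbb{S}^2$) such that for all $v$ with $\bar v = 0$,
\[
\log \fint_{\mathbb{S}^2} e^{2v} \le \frac{1}{4\pi} \int_{\mathbb{S}^2} |\nabla v|^2 + C.
\]
Applying this with $v = u(t) - \bar u(t)$, whose gradient equals $\nabla u(t)$, we get
\[
\alpha \log \fint_{\mathbb{S}^2} e^{2(u - \bar u)} \le \frac{\alpha}{4\pi} \int_{\mathbb{S}^2} |\nabla u|^2 + \alpha C = \alpha \fint_{\mathbb{S}^2} |\nabla u|^2 + \alpha C
\]
when $\alpha \ge 0$; for $\alpha < 0$ the logarithm term is even more favorable (it is bounded above by $|\alpha| C'$ using that $\fint e^{2v} \ge e^{2\bar{0}} = 1$ by Jensen, so $\log \fint e^{2v} \ge 0$, hence $\alpha \log \fint e^{2v} \le 0$). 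Feeding this into the energy identity,
\[
\fint_{\mathbb{S}^2} |\partial_t u(t)|^2 + (1 - \alpha) \fint_{\mathbb{S}^2} |\nabla u(t)|^2 \le E(u_0) + \alpha C,
\]
and here is exactly where $\alpha < 1$ is used: the coefficient $1 - \alpha$ is strictly positive, so
\[
\fint_{\mathbb{S}^2} |\partial_t u(t)|^2 + \fint_{\mathbb{S}^2} |\nabla u(t)|^2 \le \frac{E(u_0) + \alpha C}{\min(1, 1 - \alpha)},
\]
a bound uniform in $t$.

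**Finishing and the main obstacle.** With this uniform a priori bound on $\|\partial_t u(t)\|_{L^2}^2 + \|\nabla u(t)\|_{L^2}^2$ (and the conservation of $\int_{\mathbb{S}^2} u$), the continuation argument closes: if the maximal existence interval were $[0, T^*)$ with $T^* < \infty$, then at any $t$ close to $T^*$ the norms are bounded by the fixed constant above, so Theorem~\ref{thm:localexistence} lets us extend the solution by a fixed additional time from such a $t$, contradicting maximality. The main thing to be careful about — and the only real subtlety — is the rigorous justification that $E(u(t))$ is conserved at the regularity level $\dot H^1 \times L^2$, which is asserted after Theorem~\ref{thm:localexistence}; one typically verifies this by an energy estimate at the level of the approximating (smooth, or higher-regularity) solutions built in the local existence proof and passing to the limit, checking that $\log \fint e^{2(u - \bar u)}$ is continuous on $\dot H^1$ (again via Moser--Trudinger, which gives both upper semicontinuity and, together with the exponential integrability, enough uniform integrability to pass to the limit). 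Once conservation is in hand, the rest is the soft continuation argument above; I expect no further obstacle.
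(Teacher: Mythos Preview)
Your proposal is correct and follows essentially the same route as the paper's proof: conservation of energy, the Moser--Trudinger inequality (\ref{eq:logMT}) for $\alpha\in[0,1)$, Jensen's inequality for $\alpha<0$, and then the standard continuation argument. The only quibble is that your unified bound $\frac{E(u_0)+\alpha C}{\min(1,1-\alpha)}$ is not quite right for $\alpha<0$ (there the correct bound is simply $E(u_0)$, and $\alpha C<0$), but you already handled that case separately and correctly, so this is cosmetic rather than a genuine gap.
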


The main tool in proving this second theorem is the Moser-Trudinger inequality \cite{MR0301504} on the sphere, which says if $u$ is a function on $\mathbb{S}^2$ satisfying $\int_{\mathbb{S}^2} |\nabla u|^2 \leq 1$, then 
$$
\fint_{\mathbb{S}^2} e^{4\pi (u-\bar{u})^2}\leq C
$$
where $\bar{u} = \fint_{\mathbb{S}^2} u$. This inequality is sharp in that one cannot replace $4\pi$ in the exponent by anything that is strictly bigger. Note that this inequality can also be stated as 
$$
\fint_{\mathbb{S}^2} \exp \left( \frac{(u-\bar{u})^2}{\fint_{\mathbb{S}^2} |\nabla u|^2} \right) \leq C,
$$
if $\fint_{\mathbb{S}^2} |\nabla u|^2 < \infty$. 

What we will usually use is the following corollary of the above inequality, namely
\begin{equation} \label{eq:explogMT}
\fint_{\mathbb{S}^2} e^{2(u-\bar{u})}  \leq C \exp\left( \fint_{\mathbb{S}^2} |\nabla u|^2 \right),
\end{equation}
which holds because pointwise $$2(u-\bar{u}) \leq \frac{(u-\bar{u})^2}{\fint_{\mathbb{S}^2} |\nabla u|^2} + \fint_{\mathbb{S}^2} |\nabla u|^2.$$ Equivalently, inequality (\ref{eq:explogMT}) can be stated
\begin{equation} \label{eq:logMT}
\log \left( \fint_{\mathbb{S}^2} e^{2(u-\bar{u})}  \right) \leq \fint_{\mathbb{S}^2} |\nabla u|^2 + \log C.
\end{equation}
%The fact that the constant is $4\pi$ in the Moser-Trudinger inequality is reflected in that the above inequality in that the coefficient of $\fint_{\mathbb{S}^2} |\nabla u|^2$ on the right hand side is 1. 
A celebrated result of Onofri \cite{MR677001} says that the constant $C$ can be taken to be 1 in (\ref{eq:logMT}), but we will not need this in the sequel. 

Using (\ref{eq:logMT}) and conservation of energy, one can then control, as long as the solution exists, the quantity $$\|\partial_t u(t)\|_{L^2(\mathbb{S}^2)} +\|\nabla u(t)\|_{L^2(\mathbb{S}^2)}$$ uniformly in $t$, and this will prove Theorem~\ref{thm:globalexistence}.

In fact Moser \cite{MR0339258} has also proved the following inequality, which says that if $u$ is an \emph{even} function on $\mathbb{S}^2$ satisfying $\int_{\mathbb{S}^2} |\nabla u|^2 \leq 1$, then $$\fint_{\mathbb{S}^2} e^{8\pi (u-\bar{u})^2} \leq C.$$ It follows that for such functions, \begin{equation} \label{eq:logMTeven}
\log \left( \fint_{\mathbb{S}^2} e^{2(u-\bar{u})}  \right) \leq \frac{1}{2} \fint_{\mathbb{S}^2} |\nabla u|^2 + \log C.
\end{equation}
From this, we deduce

\begin{thm} \label{thm:globalexistenceeven}
The solution $u(t,x)$ in Theorem~\ref{thm:localexistence} exists for all time if both $u_0$ and $u_1$ are even functions and if $\alpha < 2$.
\end{thm}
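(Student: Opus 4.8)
The plan is to imitate the proof of Theorem~\ref{thm:globalexistence} almost verbatim; the one extra ingredient is that evenness of the data is propagated by the flow, and this is precisely what lets us use Moser's stronger inequality (\ref{eq:logMTeven}) in place of (\ref{eq:logMT}), thereby replacing the threshold $\alpha < 1$ by $\alpha < 2$. So first I would show that if $u_0$ and $u_1$ are both even, then the solution $u(t,\cdot)$ of Theorem~\ref{thm:localexistence} is even for every $t$ in its interval of existence. The antipodal map $R\colon x \mapsto -x$ is an isometry of $(\mathbb{S}^2, g)$, so $\Delta_g$ commutes with it and the average $\fint_{\mathbb{S}^2} e^{2u}$ is unchanged under $u \mapsto u \circ R$; since the right-hand side of (\ref{eq:scalarLiou}) is, apart from that average, a pointwise function of $u$, the function $\tilde u(t,x) := u(t,-x)$ solves (\ref{eq:scalarLiou}) with the same initial data $(u_0,u_1)$. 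By the uniqueness contained in the local well-posedness statement recorded just after Theorem~\ref{thm:localexistence}, $\tilde u \equiv u$, i.e. $u(t,\cdot)$ is even. (Equivalently, one checks that the contraction argument proving Theorem~\ref{thm:localexistence} goes through within the closed subspace of even functions.)

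Next, because $\int_{\mathbb{S}^2} u(t) = \int_{\mathbb{S}^2} u_0$, the mean $\bar u(t)$ is constant in $t$, so $u(t,\cdot) - \bar u(t)$ is again even and (\ref{eq:logMTeven}) applies to it. For $0 \le \alpha < 2$, conservation of energy then gives
$$E(u(0)) = E(u(t)) \geq \fint_{\mathbb{S}^2} \left( |\partial_t u(t)|^2 + |\nabla u(t)|^2 \right) - \frac{\alpha}{2} \fint_{\mathbb{S}^2} |\nabla u(t)|^2 - \alpha \log C,$$
and since $1 - \frac{\alpha}{2} > 0$ this controls $\fint_{\mathbb{S}^2} |\partial_t u(t)|^2 + \fint_{\mathbb{S}^2} |\nabla u(t)|^2$ uniformly in $t$. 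For $\alpha < 0$ one does not even need (\ref{eq:logMTeven}): Jensen's inequality gives $\fint_{\mathbb{S}^2} e^{2(u - \bar u)} \geq e^{2 \fint_{\mathbb{S}^2}(u - \bar u)} = 1$, so the $-\alpha \log (\fint_{\mathbb{S}^2} e^{2(u-\bar u)})$ term in $E$ is nonnegative and $E(u(0)) \geq \fint_{\mathbb{S}^2}(|\partial_t u(t)|^2 + |\nabla u(t)|^2)$ directly. Either way, $\|\partial_t u(t)\|_{L^2}^2 + \|\nabla u(t)\|_{L^2}^2$ is bounded by a constant depending only on $\alpha$, $\|u_0\|_{\dot{H}^1}$ and $\|u_1\|_{L^2}$.

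Finally I would run the continuation argument from Theorem~\ref{thm:globalexistence}. If the maximal existence time $T^*$ were finite, pick $t_0 < T^*$: the data $(u(t_0), \partial_t u(t_0))$ is even, lies in $\dot{H}^1 \times L^2$, and satisfies $\int_{\mathbb{S}^2} \partial_t u(t_0) = 0$ (differentiate $\int_{\mathbb{S}^2} u(t) = \int_{\mathbb{S}^2} u_0$), with norms bounded independently of $t_0$ by the previous step. Theorem~\ref{thm:localexistence}, applied in the even class, extends the solution by a fixed amount of time $T > 0$ that is independent of $t_0$; choosing $t_0 > T^* - T$ contradicts maximality, so $T^* = \infty$. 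The only genuinely new point, and the one I expect to need the most care, is the propagation of evenness in the first step: one must make sure the local solution map preserves the even subspace, which rests on the uniqueness half of local well-posedness. Everything after that is a routine rerun of the $\alpha < 1$ argument with the constant $1$ of (\ref{eq:logMT}) replaced by the $1/2$ of (\ref{eq:logMTeven}).
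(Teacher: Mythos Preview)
Your proposal is correct and follows exactly the approach the paper indicates: propagate evenness of the solution (which the paper asserts without detail and you justify via the antipodal isometry and uniqueness), then rerun the proof of Theorem~\ref{thm:globalexistence} with (\ref{eq:logMTeven}) in place of (\ref{eq:logMT}). In fact you have supplied the details the paper explicitly omits, so there is nothing to correct.
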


Lin-Zhang \cite{MR2580507} and Chipot-Shafrir-Wolansky \cite{MR1473855} studied the profile of bubbling solutions of the following system of equations on $\mathbb{R}^2$, which was introduced in \cite{MR1361515} and generalizes the Liouville equation (\ref{eq:Liou}):
\begin{equation} \label{eq:Liousys}
-\Delta u_i(x) = \sum_{j=1}^N a_{ij} M_j e^{u_j} \quad i=1, \dots, N.
\end{equation}
Here $(a_{ij})$ is a (constant) $N$ by $N$ symmetric matrix, and $(M_j)$ is a vector. We now turn to a study of the wave analog of this equation, namely
$$
\partial_t^2 u_i - \Delta_g u_i = \sum_{j=1}^N a_{ij} M_j \left(\frac{e^{2u_j}}{\fint_{\mathbb{S}^2} e^{2u_j}} - 1\right), \quad i=1,\dots,N \quad \text{ on $\mathbb{S}^2$}.
$$
This system will be written succintly as 
\begin{equation} \label{eq:waveL}
\partial_t^2 u - \Delta_g u = A \left(\frac{M e^{2u}}{\fint_{\mathbb{S}^2} e^{2u}} - M\right)
\end{equation}
where we think of $u$ as a column vector and write $A$ for the matrix $(a_{ij})$. The bracket on the right hand side is a column vector whose $j$-th entry is $$\frac{M_j e^{2u_j}}{\fint_{\mathbb{S}^2} e^{2u_j}} - M_j.$$ We then have the following result, generalizing Theorem~\ref{thm:localexistence}:

\begin{thm} \label{thm:localexistencesystem}
Suppose $A$, $M$ is as above. For any $u_0 \in \dot{H}^1(\mathbb{S}^2)$ and $u_1 \in L^2(\mathbb{S}^2)$ that takes values in $\mathbb{R}^N$ and satisfy $\int_{\mathbb{S}^2} u_1 = 0$ (i.e. all components of $u_1$ have integral zero), there exists $u \colon [0,T] \times \mathbb{S}^2 \to \mathbb{R}^N$ in $C^0_t \dot{H}^1_x \cap C^1_t L^2_x$ that solves (\ref{eq:waveL})
with initial data $u(0) = u_0$, $\partial_t u(0)=u_1$, where $T > 0$ depends only on $A$, $M$, $\|u_0\|_{\dot{H}^1}$ and $\|u_1\|_{L^2}$. Furthermore, 
$$
\int_{\mathbb{S}^2} u(t) = \int_{\mathbb{S}^2} u_0 
$$
for all $t \in [0,T]$.
\end{thm}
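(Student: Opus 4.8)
The plan is to solve the system by a contraction-mapping (Picard iteration) argument in the energy space $C^0_t \dot H^1_x \cap C^1_t L^2_x$, following verbatim the proof of Theorem~\ref{thm:localexistence}; passing from the scalar equation to the system only costs some bookkeeping with the matrix $A$ and the vector $M$. First I would split off the spatial average: writing $u(t) = \bar u(t) + v(t,\cdot)$ with $\bar u(t) = \fint_{\mathbb S^2} u(t) \in \mathbb R^N$ and $\fint_{\mathbb S^2} v(t) = 0$, one notes that each quotient $e^{2u_j}/\fint_{\mathbb S^2}e^{2u_j}$ equals $e^{2v_j}/\fint_{\mathbb S^2}e^{2v_j}$, so the nonlinearity sees only $v$, and that every component of the right-hand side of \eqref{eq:waveL} has zero average over $\mathbb S^2$. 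Averaging \eqref{eq:waveL} and using $\int_{\mathbb S^2}\Delta_g u = 0$ then gives $\partial_t^2 \bar u = 0$, which together with $\partial_t \bar u(0) = \fint_{\mathbb S^2} u_1 = 0$ forces $\bar u(t) \equiv \fint_{\mathbb S^2}u_0$; this is exactly the asserted conservation law, and it reduces matters to solving for the mean-zero part $v$ with mean-zero data $v_0 = u_0 - \fint_{\mathbb S^2}u_0 \in \dot H^1$ and $v_1 = u_1 \in L^2$. On $\mathbb S^2$ the Poincar\'e inequality makes $\|\nabla v\|_{L^2}$ and $\|v\|_{H^1}$ comparable on such functions.

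Next I would set up the Duhamel formulation. Let $\mathcal L = -\Delta_g$ on the mean-zero subspace of $L^2(\mathbb S^2)$; it is positive with bounded inverse, and the half-wave operators $\cos(t\sqrt{\mathcal L})$ and $\sin(t\sqrt{\mathcal L})/\sqrt{\mathcal L}$ preserve that subspace. A solution with the prescribed data is a fixed point of
\[
\Phi(v)(t) = \cos(t\sqrt{\mathcal L})\,v_0 + \frac{\sin(t\sqrt{\mathcal L})}{\sqrt{\mathcal L}}\,v_1 + \int_0^t \frac{\sin((t-s)\sqrt{\mathcal L})}{\sqrt{\mathcal L}}\, N(v(s))\, ds,
\]
where $N(v)$ is the $\mathbb R^N$-valued map whose $i$-th component is $\sum_{j=1}^N a_{ij}M_j\big(e^{2v_j}/\fint_{\mathbb S^2}e^{2v_j} - 1\big)$. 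The free-wave energy identity gives $\sup_{t\in[0,T]}(\|\nabla \Phi(v)(t)\|_{L^2} + \|\partial_t \Phi(v)(t)\|_{L^2}) \le C(\|\nabla v_0\|_{L^2} + \|v_1\|_{L^2}) + C\int_0^T \|N(v(s))\|_{L^2}\,ds$, together with the analogous bound for the difference $\Phi(v) - \Phi(w)$ in terms of $\int_0^T \|N(v(s)) - N(w(s))\|_{L^2}\,ds$; moreover $\Phi(v) \in C^0_t\dot H^1_x \cap C^1_t L^2_x$ as soon as $s \mapsto N(v(s))$ is continuous into $L^2$.

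The substantive point — and the step I expect to be the main obstacle — is the nonlinear estimate: on mean-zero $v \colon \mathbb S^2 \to \mathbb R^N$ with $\|\nabla v\|_{L^2} \le R$ one must show $\|N(v)\|_{L^2} \le C(R)$ and $\|N(v) - N(w)\|_{L^2} \le C(R)\|v - w\|_{H^1}$, where $C(R)$ is allowed to depend on $\max_{i,j}|a_{ij}|$ and $\max_j|M_j|$. Here the Moser--Trudinger inequality \eqref{eq:explogMT}, applied to $\lambda v_j$ for arbitrary $\lambda \in \mathbb R$ (legitimate since $v_j$ is already mean-zero), shows $e^{2v_j} \in L^p(\mathbb S^2)$ for every finite $p$ with $\|e^{2v_j}\|_{L^p} \le C_p \exp(c_p\|\nabla v_j\|_{L^2}^2)$, while Jensen's inequality gives $\fint_{\mathbb S^2}e^{2v_j} \ge e^{2\fint_{\mathbb S^2}v_j} = 1$, so the denominators stay bounded below. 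Writing $e^{2v_j} - e^{2w_j} = 2(v_j - w_j)\int_0^1 e^{2(sv_j + (1-s)w_j)}\,ds$ and using H\"older with $\tfrac{1}{q} + \tfrac{1}{r} = \tfrac{1}{2}$, the two-dimensional Sobolev embedding $H^1(\mathbb S^2)\hookrightarrow L^q$ (valid for all finite $q$), and the $L^r$-bound above applied to the convex combination $sv_j + (1-s)w_j$, one gets the Lipschitz estimate for the numerators; the difference of the denominators, being $\fint_{\mathbb S^2}(e^{2v_j}-e^{2w_j})$, is controlled by the same quantity, and linearity of $N$ in the coefficients $a_{ij}M_j$ reduces everything to these scalar bounds.

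Finally I would close the argument. Set $D_0 = \|\nabla v_0\|_{L^2} + \|v_1\|_{L^2}$ (if $D_0 = 0$ the solution is identically zero) and $R = 2C D_0$. On the complete metric space of $v \in C^0_t([0,T];\dot H^1)\cap C^1_t([0,T];L^2)$ with the given initial data and $\sup_{t\in[0,T]}(\|\nabla v(t)\|_{L^2} + \|\partial_t v(t)\|_{L^2}) \le R$, the bounds of the previous two paragraphs (combined with Poincar\'e to pass between the $H^1$ and the energy norm) give $\|\Phi(v)\| \le C D_0 + T\,C(R)$ and $d(\Phi v,\Phi w) \le T\,C(R)\,d(v,w)$; choosing $T$ depending only on $R$ — hence only on $A$, $M$, $\|u_0\|_{\dot H^1}$, $\|u_1\|_{L^2}$ — so small that $T\,C(R) \le \min(D_0, \tfrac{1}{2})$ makes $\Phi$ a contraction of this space into itself. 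Its fixed point $v$, plus the constant vector $\fint_{\mathbb S^2}u_0$, is the desired solution $u$; the same computation yields uniqueness in this class (and, by a routine continuation/difference argument, local well-posedness), while the conservation of $\int_{\mathbb S^2}u(t)$ was already established in the first step.
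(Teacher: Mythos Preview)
Your proposal is correct and follows essentially the same route as the paper: a contraction mapping in $C^0_t\dot H^1_x\cap C^1_t L^2_x$ via Duhamel, with the nonlinearity controlled by Moser--Trudinger (to put $e^{2v_j}$ in $L^p$) and Jensen (to bound the denominator below), and the Lipschitz estimate obtained by H\"older plus the Sobolev embedding $H^1\hookrightarrow L^q$. The only cosmetic difference is that you split off the spatial average at the outset and derive the conservation of $\fint u$ a priori, whereas the paper builds the constraint $\fint u(s)=\fint u_0$ into the fixed-point set and uses the specific exponents $L^4\times L^4$ in the H\"older step; neither change is substantive.
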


It is also easy to show that (\ref{eq:waveL}) is locally well-posed in $\dot{H}^1 \times L^2$, and that the following energy is conserved over time as long as the solution exists:
$$
E(u(t)) = \fint_{\mathbb{S}^2} \sum_{i,j=1}^N a^{ij} ((\partial_t u_i)(\partial_t u_j) + (\nabla u_i, \nabla u_j)_g) - \sum_{i=1}^N M_i \log \left( \fint_{\mathbb{S}^2}  e^{2(u_i - \bar{u}_i)} \right).
$$ 
Here $(a^{ij})$ is the inverse of the matrix $(a_{ij})$, and $(\cdot, \cdot)_g$ is the inner product of two tangent vectors on $\mathbb{S}^2$ with respect to the metric $g$. 

To study global existence for (\ref{eq:waveL}), we need a generalization of the Moser-Trudinger inequality to systems, which was proved by Shafrir-Wolansky \cite{MR2159222} (see also Wang \cite{MR1689869}). To state this, let
\begin{equation} \label{eq:Lambdef}
\Lambda_J(M) = \sum_{j \in J} M_j - \sum_{i,j \in J} a_{ij} M_i M_j
\end{equation}
for all non-empty subsets $J$ of $\{1, \dots, N\}$. These polynomials in $M$ were first introduced in \cite{MR1361515}, where the symmetry of solutions of (\ref{eq:Liousys}) was studied. Now suppose $A$ is positive definite and has non-negative entries (in addition to being $N$ by $N$ symmetric). Suppose also that $M_j > 0$ for all $j$. Then the generalized Moser-Trudinger inequality says the following: the quantity
\begin{equation} \label{eq:logMTsystem}
\fint_{\mathbb{S}^2} \sum_{i,j=1}^N a^{ij} (\nabla u_i, \nabla u_j)_g - \sum_{i=1}^N M_i \log \left( \fint_{\mathbb{S}^2}  e^{2(u_i - \bar{u}_i)} \right) 
\end{equation}
is bounded below by some finite constant when $u$ varies over all $\mathbb{R}^N$ valued maps in $\dot{H}^1(\mathbb{S}^2)$, if and only if $$\Lambda_J(M) \geq 0$$ for all non-empty subsets $J$ of $\{1,\dots,N\}$. From this and conservation of energy, we deduce the following global existence result:

\begin{thm} \label{thm:globalLsys}
Suppose $A=(a_{ij})$ is a positive definite symmetric $N$ by $N$ matrix with non-negative entries. Suppose $M=(M_j)$ is a column vector in $\mathbb{R}^N$ with positive entries, and suppose $$\Lambda_J(M) > 0$$ for all non-empty subsets $J$ of $\{1, \dots, N\}$. Then the solution $u(t,x)$ in Theorem~\ref{thm:localexistencesystem} exists for all time.
\end{thm}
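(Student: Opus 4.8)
The plan is to combine the conservation of the energy $E(u(t))$ with the generalized Moser--Trudinger inequality (\ref{eq:logMTsystem}) in order to obtain a uniform-in-$t$ bound on
$$
\|\partial_t u(t)\|_{L^2(\mathbb{S}^2)}^2 + \|\nabla u(t)\|_{L^2(\mathbb{S}^2)}^2,
$$
and then to bootstrap via the local existence Theorem~\ref{thm:localexistencesystem} to conclude that the solution cannot blow up in finite time. Since the time $T$ in Theorem~\ref{thm:localexistencesystem} depends only on $A$, $M$ and the size of the data in $\dot H^1 \times L^2$, a uniform a priori bound on these quantities lets us restart the solution repeatedly with steps of a fixed length and hence continue it to all of $[0,\infty)$ (and by time reversal, to all of $\mathbb{R}$). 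So the whole game is the a priori bound.

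The first step is to rewrite the conserved energy so as to isolate the kinetic and gradient terms. Because $A$ is symmetric and positive definite, so is $A^{-1}=(a^{ij})$; hence there is $c>0$ with
$$
\sum_{i,j=1}^N a^{ij} \xi_i \xi_j \geq c\,|\xi|^2 \quad \text{for all } \xi \in \mathbb{R}^N.
$$
Applying this pointwise to $\xi = \partial_t u$ and to each component of $\nabla u$, we get
$$
\fint_{\mathbb{S}^2} \sum_{i,j} a^{ij}\big((\partial_t u_i)(\partial_t u_j) + (\nabla u_i,\nabla u_j)_g\big) \;\geq\; c \fint_{\mathbb{S}^2}\big(|\partial_t u|^2 + |\nabla u|^2\big).
$$
Therefore, writing $Q(t)$ for the left-hand side quadratic form and $E_0 := E(u(0))$ for the conserved value,
$$
Q(t) \;=\; E_0 + \sum_{i=1}^N M_i \log\!\Big(\fint_{\mathbb{S}^2} e^{2(u_i(t)-\bar u_i(t))}\Big).
$$
The issue is that the logarithmic terms on the right are a priori large and positive (the nonlinearity is defocusing-like only in part), so this identity alone does not close. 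This is where the strict inequalities $\Lambda_J(M)>0$ enter.

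The main step — and the main obstacle — is to show that the strict positivity $\Lambda_J(M)>0$ for all nonempty $J$ upgrades the ``bounded below'' statement of the generalized Moser--Trudinger inequality into a coercive one: namely that there exist constants $\varepsilon>0$ and $C<\infty$, depending only on $A$ and $M$, such that for every $\mathbb{R}^N$-valued $v \in \dot H^1(\mathbb{S}^2)$,
$$
\fint_{\mathbb{S}^2}\sum_{i,j} a^{ij}(\nabla v_i,\nabla v_j)_g \;-\; \sum_{i=1}^N M_i \log\!\Big(\fint_{\mathbb{S}^2} e^{2(v_i-\bar v_i)}\Big) \;\geq\; \varepsilon \fint_{\mathbb{S}^2} \sum_{i,j} a^{ij}(\nabla v_i,\nabla v_j)_g \;-\; C.
$$
Granting this, apply it with $v = u(t)$: the left-hand side is exactly $Q(t) - (\text{quadratic form} - \text{log terms})$; more directly, subtracting $\sum_i M_i \log(\cdots)$ from both sides of the energy identity and using the coercive inequality yields
$$
E_0 \;=\; Q(t) - \sum_i M_i \log\!\Big(\fint e^{2(u_i-\bar u_i)}\Big) \;\geq\; \varepsilon\, \fint_{\mathbb{S}^2}\sum_{i,j} a^{ij}(\nabla u_i,\nabla u_j)_g - C \;\geq\; \varepsilon c \fint_{\mathbb{S}^2}|\nabla u(t)|^2 - C,
$$
which bounds $\|\nabla u(t)\|_{L^2}^2$ by $(E_0+C)/(\varepsilon c)$, uniformly in $t$. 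Feeding this back into the energy identity then bounds the log terms (using (\ref{eq:logMTsystem}) once more, now just the lower bound, to control them from below, and the just-obtained gradient bound plus (\ref{eq:logMT}) applied componentwise to control them from above), and hence bounds $Q(t)$, hence $\|\partial_t u(t)\|_{L^2}^2$, uniformly in $t$. Combined with the remark about the $t$-independence of $T$, this completes the proof.

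To establish the coercive inequality, I would argue by a scaling/homogeneity contradiction. Consider the functional
$$
F(v) = \fint_{\mathbb{S}^2}\sum_{i,j} a^{ij}(\nabla v_i,\nabla v_j)_g - \sum_{i=1}^N M_i \log\!\Big(\fint_{\mathbb{S}^2} e^{2(v_i-\bar v_i)}\Big),
$$
which is invariant under adding constants to each component, so we may assume $\bar v_i = 0$ for all $i$. Suppose the coercive bound fails: then there is a sequence $v^{(k)}$ with $D_k := \fint \sum_{i,j} a^{ij}(\nabla v^{(k)}_i,\nabla v^{(k)}_j)_g \to \infty$ and $F(v^{(k)}) \leq \tfrac12 D_k$, i.e. $\sum_i M_i \log(\fint e^{2v^{(k)}_i}) \geq \tfrac12 D_k$. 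Normalizing $w^{(k)} = v^{(k)}/\sqrt{D_k}$ (so the normalized quadratic Dirichlet form equals $1$), the bound becomes
$$
\sum_{i=1}^N M_i \,\frac{1}{\sqrt{D_k}}\log\!\Big(\fint_{\mathbb{S}^2} e^{2\sqrt{D_k}\, w^{(k)}_i}\Big) \;\geq\; \tfrac12 \sqrt{D_k}.
$$
Up to a subsequence $w^{(k)} \rightharpoonup w$ weakly in $\dot H^1$; one then analyzes the concentration behaviour of the measures $e^{2v^{(k)}_i}$ as in the proof of the Shafrir--Wolansky inequality (a concentration-compactness argument, isolating a subset $J$ of indices along which mass concentrates at common points), and shows that the large-$D_k$ asymptotics force, in the limit, the borderline identity $\Lambda_J(M)=0$ for some nonempty $J$ — contradicting the strict hypothesis $\Lambda_J(M)>0$. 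The technical heart is thus a quantitative refinement of the Shafrir--Wolansky argument: their inequality says $F$ is bounded below precisely at the threshold $\Lambda_J(M)\geq 0$, and a compactness argument shows that being strictly above the threshold for every $J$ buys a fixed fraction $\varepsilon$ of the Dirichlet energy. An alternative, and perhaps cleaner, route is to invoke the sharp form of the inequality: Shafrir--Wolansky in fact identify the best constant and the degenerate directions, and one can read off coercivity directly from the strict inequalities. Either way, this coercivity upgrade is the one nontrivial analytic input; everything else is the energy bookkeeping and the continuation argument sketched above.
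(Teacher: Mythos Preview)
Your overall strategy matches the paper's exactly: conservation of the energy $E(u(t))$, a coercive version of the Shafrir--Wolansky inequality (the paper states this as Proposition~\ref{prop:logMTsystempositive}), and then the continuation argument via Theorem~\ref{thm:localexistencesystem}. The bookkeeping you do after assuming coercivity is correct, if slightly more roundabout than the paper's (the paper bounds the kinetic and gradient parts in one stroke by adding the positive-definite kinetic quadratic form to the coercive gradient inequality).

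The real difference is in how you obtain the coercive inequality. You propose a concentration--compactness contradiction argument that would essentially rerun the Shafrir--Wolansky blow-up analysis with the strict hypothesis $\Lambda_J(M)>0$, extracting in the limit some $J$ with $\Lambda_J(M)=0$. This is plausible but heavy, and as written it is only a sketch: the normalization $w^{(k)} = v^{(k)}/\sqrt{D_k}$ and the claimed asymptotics would need substantial work to make rigorous. The paper bypasses all of this with a two-line perturbation trick: since $\Lambda_J(M)>0$ for all $J$ is an \emph{open} condition in the entries of $A$, set $A' = A - 2\varepsilon\,\mathrm{Id}$ for small $\varepsilon>0$; then $A'$ is still symmetric positive definite with non-negative entries and the corresponding $\Lambda'_J(M)$ remain positive, so Shafrir--Wolansky applies to $A'$ and gives a uniform lower bound for the functional built from $(A')^{-1}$. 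Since $A^{-1} = (A')^{-1} + 2\varepsilon\,\mathrm{Id} + O(\varepsilon^2)$, the difference between the two functionals is $(2\varepsilon + O(\varepsilon^2))\fint |\nabla u|^2$, which for $\varepsilon$ small enough yields exactly the coercivity gain $\varepsilon\fint|\nabla u|^2 - C$ you need. This perturbation argument is the clean route you were missing; your concentration--compactness proposal would work in principle but is far more labor than the problem requires.
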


Next we return to the scalar equation (\ref{eq:scalarLiou}), and study blow up of that equation when $\alpha \geq 1$. An important notion here is the center of mass of the measure $e^{2u} d\text{vol}_g$ for functions $u$ defined on $\mathbb{S}^2$. Given such a function, we define its center of mass to be $$CM(u) = \frac{\int_{\mathbb{S}^2} x e^{2u}}{\int_{\mathbb{S}^2} e^{2u}},$$ where $x$ is the position vector in $\mathbb{R}^3$. Thus $CM(u) \in \mathbb{R}^3$; in fact its length satisfies $|CM(u)| \leq 1$ by the triangle inequality. This center of mass played a crucial role in the work of Chang-Yang \cite{MR925123}  and Han \cite{MR1084455}. There they used the following improved Moser-Trudinger inequality when the center of mass is bounded away from $\mathbb{S}^2$, which was first proved by Aubin \cite{MR534672}. The improved inequality says that if $|CM(u)| \leq 1-\delta$ for some $\delta > 0$, then for any $\mu > 1/2$, there exists a constant $C = C(\mu,\delta)$ such that 
\begin{equation} \label{eq:logMTCM}
\log \left( \fint_{\mathbb{S}^2} e^{2(u-\bar{u})} \right) \leq \mu \fint_{\mathbb{S}^2} |\nabla u|^2 + \log C.
\end{equation} 
One should compare this with (\ref{eq:logMTeven}), since when $u$ is even, $CM(u) = 0$.
 
We have the following blow-up criteria.

\begin{thm} \label{thm:Lblowup}
Let $1 \leq \alpha < 2$. Suppose the solution $u$ in Theorem~\ref{thm:localexistence} exists on a time interval $[0,T_0)$ for some $T_0 < \infty$, and fails to continue beyond $T_0$. Then there is a sequence of times $t_i \to T_0^-$ such that
$$\lim_{i \to \infty} |CM(u,t_i)| = 1,$$
$$\lim_{i \to \infty} \int_{\mathbb{S}^2} e^{2u(t_i)} = \infty,$$
and
$$\lim_{i \to \infty} \|\nabla u(t_i)\|_{L^2} = \infty,$$
where $CM(u,t)$ is the center of mass of $u(t)$. Furthermore, if $\alpha = 1$, then there is some point $p \in \mathbb{S}^2$ such that for any $\varepsilon > 0$, 
$$
\lim_{i \to \infty} \frac{\int_{B(p,\varepsilon)} e^{2u(t_i)} }{\int_{\mathbb{S}^2} e^{2u(t_i)}} \geq 1 - \varepsilon.
$$
Here $B(p,\varepsilon)$ is a geodesic ball on $\mathbb{S}^2$ that is centered at $p$ and of radius $\varepsilon$. 
\end{thm}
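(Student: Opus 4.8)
The plan is to argue by contradiction, using conservation of energy to show that if the blow-up conclusions fail along every sequence, then one can bound $\|\nabla u(t)\|_{L^2}$ and $\|\partial_t u(t)\|_{L^2}$ uniformly as $t \to T_0^-$, which (by the local existence theorem, whose time of existence depends only on these norms) would let us continue the solution past $T_0$. Concretely, since $E(u(t)) = E(u_0)$ for all $t \in [0,T_0)$, and since for the scalar equation
$$
E(u(t)) = \fint_{\mathbb{S}^2}\left(|\partial_t u|^2 + |\nabla u|^2\right) - \alpha \log\left(\fint_{\mathbb{S}^2} e^{2(u-\bar u)}\right),
$$
the quantity $\fint |\partial_t u(t)|^2 + \fint |\nabla u(t)|^2$ stays bounded as long as $\alpha \log(\fint e^{2(u-\bar u)})$ stays bounded above, i.e. as long as $\log(\fint e^{2(u(t)-\bar u(t))})$ stays bounded above (here $\alpha \geq 1 > 0$). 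So the whole theorem reduces to: \emph{if the solution cannot be continued past $T_0$, then along some sequence $t_i \to T_0^-$ we have $\log(\fint e^{2(u(t_i)-\bar u(t_i))}) \to \infty$, and moreover along such a sequence the three displayed limits hold, plus the concentration statement when $\alpha = 1$.}

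First I would extract the sequence. If $\log(\fint e^{2(u(t)-\bar u(t))})$ were bounded above uniformly on $[0,T_0)$, then by conservation of energy $\|\nabla u(t)\|_{L^2}$ and $\|\partial_t u(t)\|_{L^2}$ would be bounded, contradicting the assumed failure to continue; so pick $t_i \to T_0^-$ realizing $\log(\fint e^{2(u(t_i)-\bar u(t_i))}) \to \infty$. Since $\int_{\mathbb{S}^2} u(t_i) = \int_{\mathbb{S}^2} u_0$ is fixed, $\bar u(t_i)$ is constant in $i$, so $\int_{\mathbb{S}^2} e^{2u(t_i)} \to \infty$ — that is the second limit. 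For the third limit: inequality (\ref{eq:logMT}) gives $\log(\fint e^{2(u-\bar u)}) \leq \fint |\nabla u|^2 + \log C$, so $\fint |\nabla u(t_i)|^2 \to \infty$, hence $\|\nabla u(t_i)\|_{L^2} \to \infty$. For the first limit, this is exactly where Aubin's improved inequality (\ref{eq:logMTCM}) comes in: if $|CM(u,t_i)| \leq 1-\delta$ along a subsequence for some fixed $\delta > 0$, then picking $\mu$ with $1/2 < \mu < 1/\alpha$ (possible since $\alpha < 2$) gives
$$
\log\left(\fint_{\mathbb{S}^2} e^{2(u(t_i)-\bar u(t_i))}\right) \leq \mu \fint_{\mathbb{S}^2}|\nabla u(t_i)|^2 + \log C(\mu,\delta),
$$
and feeding this into the conserved energy yields $(1 - \alpha\mu)\fint |\nabla u(t_i)|^2 \leq E(u_0) + \text{const}$ with $1 - \alpha\mu > 0$, so $\fint |\nabla u(t_i)|^2$ stays bounded along that subsequence — but then so does $\log(\fint e^{2(u(t_i)-\bar u(t_i))})$ by (\ref{eq:logMT}), contradicting our choice of $t_i$. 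Hence $|CM(u,t_i)| \to 1$ after passing to a further subsequence; a diagonal/subsequence argument upgrades this to the stated limit along the full sequence $t_i$.

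For the $\alpha = 1$ concentration statement, I would pass to a subsequence so that the probability measures $d\nu_i = e^{2u(t_i)} \, d\text{vol}_g / \int_{\mathbb{S}^2} e^{2u(t_i)}$ converge weak-$*$ to a probability measure $\nu$ on $\mathbb{S}^2$. The center-of-mass condition says $\int_{\mathbb{S}^2} x \, d\nu_i \to$ a unit vector, so since $|x| = 1$ on $\mathbb{S}^2$ and $|\int x\, d\nu| = 1$ forces $\nu = \delta_p$ with $p$ that unit vector (equality in the triangle inequality / strict convexity of the ball), $\nu$ is a Dirac mass at some $p \in \mathbb{S}^2$. Then for fixed $\varepsilon > 0$, choosing a continuous cutoff supported in $B(p,\varepsilon)$ and equal to $1$ on $B(p,\varepsilon/2)$, weak-$*$ convergence gives $\liminf_i \nu_i(B(p,\varepsilon)) \geq \nu(B(p,\varepsilon/2)) = 1$, which is even stronger than the claimed $\geq 1-\varepsilon$; one then relabels the subsequence as $t_i$.

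The main obstacle I anticipate is purely bookkeeping with subsequences: the three limits and the concentration statement are each proved along \emph{a} subsequence, and one has to be careful that they can be arranged to hold along one common sequence $t_i \to T_0^-$ — this is handled by successively passing to subsequences (the set of such times is non-empty by the energy argument, and each refinement preserves $t_i \to T_0^-$). A secondary point requiring care is the precise form of conservation of energy and the sign of $\alpha$: the argument genuinely uses $1 \leq \alpha$ (so that large $\log \fint e^{2(u-\bar u)}$ forces large Dirichlet energy and conversely) and $\alpha < 2$ (so that $\mu \in (1/2, 1/\alpha)$ exists), and I would state these uses explicitly. Everything else — the weak-$*$ compactness of probability measures on the compact manifold $\mathbb{S}^2$, the rigidity of equality in $|\int x \, d\nu| \le 1$, and the continuity of $u \mapsto CM(u)$ type estimates — is standard.
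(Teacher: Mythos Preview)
Your argument for the first three limits is correct and matches the paper's: both rely on energy conservation, the Moser--Trudinger inequality (\ref{eq:logMT}), and Aubin's improved inequality (\ref{eq:logMTCM}) for $\mu \in (1/2,1/\alpha)$, differing only in the order in which the three conclusions are extracted. (A small remark: your contradiction argument for $|CM(u,t_i)|\to 1$ already yields convergence along the \emph{full} sequence $t_i$, since boundedness of $|CM|$ along any subsequence would contradict $\log\fint e^{2(u(t_i)-\bar u)}\to\infty$; no diagonal argument is needed.)

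Where you genuinely diverge from the paper is the concentration statement for $\alpha=1$. The paper invokes the Chang--Yang concentration lemma (Proposition~\ref{lem:CY}), applied to $v_i = u(t_i)-\tfrac12\log m_i$; the hypothesis $S[v_i]\le C$ of that lemma is verified precisely because $\alpha=1$ makes $S[v_i]$ equal to $E(u(t_i))-\fint|\partial_t u(t_i)|^2 \le E(u_0)$. Your route is instead weak-$*$ compactness of the normalized measures $\nu_i$ on the compact space $\mathbb{S}^2$, together with the rigidity observation that a probability measure $\nu$ on $\mathbb{S}^2$ with $\bigl|\int_{\mathbb{S}^2} x\,d\nu\bigr|=1$ must be a Dirac mass. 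This is correct and more elementary; notably, it uses only $|CM(u,t_i)|\to 1$ and never the value of $\alpha$, so it in fact establishes the single-bubble concentration for every $\alpha\in[1,2)$ --- a point the paper explicitly leaves open in the paragraph following the theorem. The paper's approach, by contrast, ties into the variational structure via the functional $S$ and the Chang--Yang machinery, which is natural given the elliptic literature but is strictly more than what is needed here.
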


In other words, when $\alpha = 1$, if one renormalizes the measures $e^{2u(t_i)} d\text{vol}_g$ so that their integral over $\mathbb{S}^2$ is 1, then the measures concentrates around one single point on $\mathbb{S}^2$ (i.e. there is only one bubble). This is proved using a concentration lemma of Chang and Yang, which we recall in the following section. We do not know whether the same conclusion is true when $\alpha > 1$. 

Finally, we turn to a study of the following system of wave equations on $\mathbb{R}^2$:
\begin{equation} \label{eq:waveCMC}
-\partial_t^2 u + \Delta u = 2 u_x \wedge u_y 
\end{equation}
Here $u$ is a function $u \colon [0,T) \times \mathbb{R}^2 \to \mathbb{R}^3$, $\Delta$ is the Laplacian on $\mathbb{R}^2$ acting componentwise on the three components of $u$, and $u_x \wedge u_y$ is the cross product of the two vectors $u_x$ and $u_y$ in $\mathbb{R}^3$. The stationary analog of this equation is
\begin{equation} \label{eq:CMC}
\Delta u = 2 u_x \wedge u_y.
\end{equation}
This is an interesting equation because if $u$ solves $\Delta u = 2H u_x \wedge u_y$ for some function $H$ on $\mathbb{R}^2$ and satisfies the conformal conditions $|u_x| = |u_y| = 1$ and $u_x \cdot u_y = 0$ everywhere, the the image of $u$ is a surface with mean curvature $H$ in $\mathbb{R}^3$. (\ref{eq:CMC}) is the special case of the above equation when $H \equiv 1$, and is conformally invariant. As a result, we call (\ref{eq:waveCMC}) the wave constant mean curvature (CMC) equation. (\ref{eq:CMC}) is an energy critical equation, in that if $u$ is a solution, then a dilation of $u$ preserving its $\dot{H}^1$ norm is also a solution. Its (entire) solutions in $\dot{H}^1(\mathbb{R}^2)$ were classified by Brezis-Coron; in \cite{MR784102} they showed that if one writes $z$ for the complex coordinate of the domain $\mathbb{R}^2 \simeq \mathbb{C}$ of $u$ and writes $\pi \colon \mathbb{C} \to \mathbb{S}^2 \subseteq \mathbb{R}^3$ for the stereographic projection, then all the solutions of (\ref{eq:CMC}) in $\dot{H}^1$ are of the form $$u(z) = \pi\left(\frac{P(z)}{Q(z)} \right) + C $$ where $P$, $Q$ are polynomials of $z$ and $C$ is a constant vector in $\mathbb{R}^3$. Furthermore, if $u(z)$ is as such, then $$\|\nabla u\|_{L^2}^2 = 8\pi \max\{\text{deg } P, \text{deg } Q\}.$$ It follows that the energy of the (entire) solutions to (\ref{eq:CMC}) are quantized; they are always non-negative integer multiples of $8\pi$. 

Now let $W(z)$ be a ground state solution to (\ref{eq:CMC}); in other words, $W(z)$ is a non-constant solution to (\ref{eq:CMC}) of the form $$W(z) = \pi \left(\frac{P(z)}{Q(z)} \right) + C$$ where $\max\{\text{deg }P, \text{deg }Q\} = 1$ and $\|\nabla W\|_{L^2}^2 = 8\pi$. These will play an important role in our blow up analysis of the wave equation (\ref{eq:waveCMC}). They enter via the following Sobolev inequality. First, it is easy to show, using compensated compactness (aka Wente's inequality) that for all functions $v \in \dot{H}^1(\mathbb{R}^2)$ taking values in $\mathbb{R}^3$, we have 
$$
\left|\int_{\mathbb{R}^2} v \cdot (v_x \wedge v_y) dx dy \right|^{1/3} \leq C \|\nabla v\|_{L^2}.
$$
In fact if $v$ is in $\dot{H}^1(\mathbb{R}^2)$, then $v_x \wedge v_y$ has components in the Hardy space $\mathcal{H}^1(\mathbb{R}^2)$ by compensation compactness, while $v$ itself has components in $BMO$. Thus we have the above inequality. The relevance of $W$ is that the above $W$'s are precisely the minimizers of this inequality; see Caldiroli-Musina \cite{MR2221202}, Lemma 2.1 (and also \cite{MR784102}). We note also that $W$ is a stationary solution to (\ref{eq:waveCMC}), with initial data $u(0) = W$, $\partial_t u(0) = 0$.

The non-linearity occuring on the left hand side of the above Sobolev inequality also arises in the conserved energy of the wave equation (\ref{eq:waveCMC}). In fact if $u(t)$ is a smooth solution to (\ref{eq:waveCMC}) that has compact support on each time slice, then 
$$
E(u(t)) := \int_{\mathbb{R}^2} \frac{1}{2} (|\partial_t u|^2 + |\nabla u|^2) + \frac{2}{3} u \cdot (u_x \wedge u_y) dx dy
$$
is conserved, as one can show by differentiating under the integral. As a result, $E(u(t))$ depends only on the initial data, and it is equal to 
$$
E(u_0,u_1) := \int_{\mathbb{R}^2} \frac{1}{2} (|u_1|^2 + |\nabla u_0|^2) + \frac{2}{3} u_0 \cdot ((u_0)_x \wedge (u_0)_y) dx dy
$$
for all $t$. Our main result is the following:

\begin{thm} \label{thm:finitetimeblowup}
Suppose $u \colon [0,T) \times \mathbb{R}^2 \to \mathbb{R}^3$ is a smooth solution to (\ref{eq:waveCMC}) with initial data $u(0) = u_0$, $u_t(0)=u_1$, and that $u$ has compact support at each time slice $t$. Suppose also that 
$$
E(u_0,u_1) < E(W,0) \quad \text{and} \quad \|\nabla u_0\|_{L^2} > \|\nabla W\|_{L^2}.
$$
Then $T$ is finite; in fact $\|u(t)\|_{L^2(\mathbb{R}^2)}$ cannot remain finite for an infinite amount of time.
\end{thm}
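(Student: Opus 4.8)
The plan is to run a concavity (convexity) argument in the spirit of the classical method of Levine and Glassey, fed by the variational characterization of the ground state $W$. Write $E := E(u_0,u_1)$ for the conserved energy, set
$$
Q(v) := \int_{\mathbb{R}^2} v \cdot (v_x \wedge v_y)\, dx\, dy, \qquad I(v) := \|\nabla v\|_{L^2}^2 + 2 Q(v),
$$
and consider $y(t) := \|u(t)\|_{L^2(\mathbb{R}^2)}^2$, which is finite and smooth in $t$ because $u$ is smooth with compact spatial support. Differentiating twice under the integral sign, using $u_{tt} = \Delta u - 2 u_x \wedge u_y$ and integrating by parts (legitimate by compact support), I get $y'(t) = 2\int_{\mathbb{R}^2} u \cdot u_t$ and $y''(t) = 2\|u_t(t)\|_{L^2}^2 - 2 I(u(t))$. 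Eliminating $Q(u(t))$ via the energy identity $E = \tfrac12\|u_t\|_{L^2}^2 + \tfrac12\|\nabla u\|_{L^2}^2 + \tfrac23 Q(u)$ turns this into
$$
y''(t) = 5\|u_t(t)\|_{L^2}^2 + \|\nabla u(t)\|_{L^2}^2 - 6E, \qquad I(u(t)) = 3E - \tfrac32\|u_t(t)\|_{L^2}^2 - \tfrac12\|\nabla u(t)\|_{L^2}^2.
$$

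Next I record the variational input. Since $W$ is a stationary solution, testing $\Delta W = 2 W_x \wedge W_y$ against $W$ gives $\|\nabla W\|_{L^2}^2 = -2Q(W)$, hence $E(W,0) = \tfrac16\|\nabla W\|_{L^2}^2 = \tfrac{4\pi}{3}$. Writing the cited sharp Sobolev inequality as $|Q(v)| \le C^3 \|\nabla v\|_{L^2}^3$ with $W$ an extremal, and combining with $\|\nabla W\|_{L^2}^2 = 2|Q(W)|$, forces $\|\nabla W\|_{L^2}^2 = (2C^3)^{-2}$. Consequently, for any $v \ne 0$ with $I(v) < 0$ we have $Q(v) < -\tfrac12\|\nabla v\|_{L^2}^2 < 0$, so $\|\nabla v\|_{L^2}^2 < 2|Q(v)| \le 2C^3 \|\nabla v\|_{L^2}^3$, which yields $\|\nabla v\|_{L^2}^2 > \|\nabla W\|_{L^2}^2$. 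I then show the region $\{\|\nabla u(t)\|_{L^2}^2 > \|\nabla W\|_{L^2}^2\}$ is invariant along the flow: it holds at $t=0$ by hypothesis; if $\|\nabla u(t)\|_{L^2}^2$ ever dropped to $\|\nabla W\|_{L^2}^2$ at a first time $t^\ast \in [0,T)$, then from the formula for $I(u(t))$ and $E < E(W,0) = \tfrac16\|\nabla W\|_{L^2}^2$ we would get $I(u(t^\ast)) < 0$ with $u(t^\ast) \ne 0$, contradicting the previous inequality. Hence $\|\nabla u(t)\|_{L^2}^2 > \|\nabla W\|_{L^2}^2$, and therefore $I(u(t)) < 0$, for all $t \in [0,T)$.

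Granting this, $y''(t) \ge 5\|u_t(t)\|_{L^2}^2 + \varepsilon_0$ with $\varepsilon_0 := \|\nabla W\|_{L^2}^2 - 6E > 0$, so $y'(t) \ge y'(0) + \varepsilon_0 t$ is positive for $t$ large. Cauchy--Schwarz gives $(y'(t))^2 = 4(\int_{\mathbb{R}^2} u \cdot u_t)^2 \le 4\|u(t)\|_{L^2}^2 \|u_t(t)\|_{L^2}^2 = 4 y(t)\|u_t(t)\|_{L^2}^2$, hence $y(t)y''(t) \ge 5 y(t)\|u_t(t)\|_{L^2}^2 \ge \tfrac54 (y'(t))^2$. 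Thus $(y^{-1/4})'' = \tfrac{1}{16} y^{-9/4}\big(5(y')^2 - 4 y y''\big) \le 0$, i.e. $y^{-1/4}$ is concave on $[0,T)$, and it is strictly positive there since $\|\nabla u(t)\|_{L^2} > \|\nabla W\|_{L^2} > 0$ together with compact support force $u(t) \not\equiv 0$. If $T = \infty$, then at a time $t_0$ with $y'(t_0) > 0$ the concave positive function $y^{-1/4}$ has strictly negative derivative, so it must reach $0$ in finite time --- impossible. Therefore $T < \infty$; more precisely $y(t) = \|u(t)\|_{L^2(\mathbb{R}^2)}^2$ becomes infinite in finite time.

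The main obstacle is the variational bookkeeping: verifying that the extremal of the cited Sobolev inequality is precisely $W$ and the numerology $E(W,0) = \tfrac16\|\nabla W\|_{L^2}^2$, so that $\|\nabla W\|_{L^2}$ is the \emph{exact} threshold, and establishing the invariant-region bootstrap that keeps $\|\nabla u(t)\|_{L^2}$ above $\|\nabla W\|_{L^2}$ and $I(u(t))$ negative for all $t < T$. Once those are in place, the differential-inequality step is routine.
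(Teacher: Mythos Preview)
Your argument is correct and follows essentially the same route as the paper: establish $\|\nabla u(t)\|_{L^2} > \|\nabla W\|_{L^2}$ for all $t$ from the variational characterization of $W$ and energy conservation, then feed this into the identity $y'' = 5\|u_t\|^2 + \|\nabla u\|^2 - 6E$ and run the Levine--Glassey concavity argument $yy'' \ge \tfrac{5}{4}(y')^2$. The only cosmetic difference is that you trap the invariant region via the Nehari-type functional $I(v)$ at a first crossing time, whereas the paper packages the same Sobolev input as a quantitative gap lemma for the scalar function $f(\lambda) = \tfrac12\lambda^2 - \tfrac23 C\lambda^3$; and you phrase the endgame as concavity of $y^{-1/4}$ rather than integrating to $y' \ge C y^{5/4}$ --- these are equivalent.
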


In fact $E(W,0) = 4 \pi/3$ (c.f (\ref{eq:EW}) below) and $\|\nabla W\|_{L^2} = \sqrt{8\pi}$, so the conditions in the above theorem can also be written as $$E(u_0,u_1) < \frac{4 \pi}{3} \quad \text{and} \quad \|\nabla u_0\|_{L^2} > \sqrt{8\pi}.$$ This theorem should be compared to the finite time blow up result of Kenig-Merle \cite{MR2461508} for the energy critical semi-linear focusing wave equation 
$$\partial_t^2 u - \Delta u = |u|^{4/(N-2)} u, \quad \text{on $\mathbb{R} \times \mathbb{R}^N$, $N \geq 3$}.$$

\section{Preliminaries}

Before we move on to the proofs of these theorems, we present some relevant background material. 

First, Aubin \cite{MR534672} proved the following improved Moser-Trudinger inequality:

\begin{prop}[Aubin] \label{lem:Aubin}
Let $f_j \in C^1(\mathbb{S}^2)$, $j = 1, \dots, k$, and $\sum_{j=1}^k |f_j | \geq \delta > 0$ on $\mathbb{S}^2$. Then for any $\mu > 1/2$, there exists $C = C(\mu, \delta, \sum_{j=1}^k \|f_j\|_{C^1(\mathbb{S}^2)})$ such that
$$
\fint_{\mathbb{S}^2} e^{2(u-\bar{u})} \leq C \exp \left(  \mu \fint_{\mathbb{S}^2} |\nabla u|^2 \right)
$$
for all $u \in \dot{H}^1(\mathbb{S}^2)$ satisfying $$\fint_{\mathbb{S}^2} e^{2u} f_j = 0, \quad j = 1, \dots, k.
$$
\end{prop}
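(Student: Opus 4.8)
The plan is to reduce Aubin's inequality to the standard Moser-Trudinger inequality (\ref{eq:logMT}) by a decomposition of $\mathbb{S}^2$ into regions controlled by the constraint functions, together with a partition-of-unity argument. First I would fix $\mu > 1/2$ and choose $\mu'$ with $1/2 < \mu' < \mu$. The key geometric input is that the constraint $\fint_{\mathbb{S}^2} e^{2u} f_j = 0$, combined with $\sum_j |f_j| \geq \delta$, forces the measure $e^{2u} \, d\text{vol}_g$ to be spread out: it cannot concentrate all its mass where a single $f_j$ has one sign. More precisely, since $\sum_j \int_{\mathbb{S}^2} |f_j| e^{2u} \geq \delta \int_{\mathbb{S}^2} e^{2u}$, some $j_0$ has $\int_{\mathbb{S}^2} |f_{j_0}| e^{2u} \geq (\delta/k) \int_{\mathbb{S}^2} e^{2u}$; splitting $|f_{j_0}| = f_{j_0}^+ + f_{j_0}^-$ and using $\int f_{j_0}^+ e^{2u} = \int f_{j_0}^- e^{2u}$ (from the vanishing constraint), each of the sets $\Omega^{\pm} = \{f_{j_0} \gtrless 0\}$ carries at least a fixed fraction $c_0 = c_0(\delta, k, \|f_j\|_{C^1})$ of the total mass $\int_{\mathbb{S}^2} e^{2u}$.

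Next I would use the $C^1$ bound on the $f_j$ to separate $\Omega^+$ and $\Omega^-$ by a definite distance: outside a neighborhood $N_r$ of the zero set $\{f_{j_0} = 0\}$ of width $r \sim \delta / \|f_{j_0}\|_{C^1}$, the two sets $\Omega^+ \setminus N_r$ and $\Omega^- \setminus N_r$ are separated by geodesic distance $\gtrsim r$. Hence one can pick two disjoint geodesic balls (or caps) $B_1, B_2$, of fixed radius depending only on $\delta$ and $\sum\|f_j\|_{C^1}$, with $B_\ell \subseteq \mathbb{S}^2 \setminus N_{r/2}$, so that each $B_\ell$ carries a fixed fraction of the mass of the corresponding $\Omega^\pm$, hence $\int_{B_\ell} e^{2u} \geq c_1 \int_{\mathbb{S}^2} e^{2u}$ for $\ell = 1, 2$, with $c_1 = c_1(\delta, k, \sum\|f_j\|_{C^1}) > 0$. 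The upshot is that $e^{2u}\,d\text{vol}_g$ has at least two well-separated "lumps" of mass, which is exactly the situation in which the Aubin/Chang–Yang improvement of Moser-Trudinger holds.

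Finally I would run the standard localization argument. Choose a cutoff $\chi$ supported near $B_1$ (say on a ball of twice the radius, still disjoint from $B_2$) with $\chi \equiv 1$ on $B_1$, and apply the sharp Moser-Trudinger inequality (\ref{eq:logMT}) to $\chi(u - \bar u)$ after rescaling; because the support has been shrunk to a ball that is a fixed proper subset of $\mathbb{S}^2$, the effective constant in the exponent improves from $1$ to any $\mu' > 1/2$, at the cost of an additive error $\int_{\mathbb{S}^2} |\nabla \chi|^2 (u-\bar u)^2$ plus lower order terms, which are absorbed using that $\|u - \bar u\|_{L^2}$ is controlled by $\|\nabla u\|_{L^2}$ (Poincaré) — and crucially, these error terms appear linearly rather than quadratically in $\|\nabla u\|_{L^2}$ after optimizing, so they can be absorbed into $\mu' \fint |\nabla u|^2$ with a final constant shift, giving $\fint_{B_1} e^{2(u - \bar u)} \leq C \exp(\mu' \fint |\nabla u|^2)$ and similarly for $B_2$. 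Combining with $\int_{\mathbb{S}^2} e^{2u} \leq c_1^{-1} \int_{B_1} e^{2u}$, we get $\fint_{\mathbb{S}^2} e^{2(u - \bar u)} \leq c_1^{-1} \fint_{B_1} e^{2(u-\bar u)} \leq C \exp(\mu' \fint |\nabla u|^2) \leq C \exp(\mu \fint |\nabla u|^2)$, which is the claim. The main obstacle I anticipate is making the absorption of the additive error terms clean: one needs the Moser-Trudinger improvement on a subdomain in the right sharp form (with the gain $1 \to \mu'$ for any $\mu' > 1/2$, which is itself a theorem of Aubin/Chang–Yang using that $\mathbb{S}^2$ minus a cap has first Dirichlet eigenvalue behavior forcing the constant), and then to check that the cross terms $\nabla \chi \cdot \nabla(u - \bar u)$ and $|\nabla\chi|^2(u-\bar u)^2$ are genuinely subcritical. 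An alternative, and probably cleaner, route is to cite the Chang–Yang concentration–compactness lemma (to be recalled in the next section) directly: mass splitting into two lumps of size $\geq c_1$ each is precisely the hypothesis under which that lemma yields (\ref{eq:logMTCM})-type bounds, bypassing the hands-on cutoff estimate.
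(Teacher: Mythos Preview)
The paper does not supply its own proof of this proposition; it is quoted as Aubin's result with a pointer to Lemma~1 of Han \cite{MR1084455}. So there is nothing in the paper to compare against, and the question is simply whether your sketch is sound.

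Your first two paragraphs --- extracting an index $j_0$ with $\int f_{j_0}^+ e^{2u} = \int f_{j_0}^- e^{2u} \gtrsim \delta \int e^{2u}$, and using the $C^1$ bound to find two separated regions each carrying a fixed fraction of the mass --- are correct and are the standard opening of the argument. The gap is in the third paragraph. The assertion that ``because the support has been shrunk to a ball that is a fixed proper subset of $\mathbb{S}^2$, the effective constant in the exponent improves from $1$ to any $\mu' > 1/2$'' is false: the sharp Moser--Trudinger exponent for functions supported in a cap (equivalently, satisfying a Dirichlet condition on a subdomain) is the same as on the full sphere; no improvement comes from shrinking the support, and ``first Dirichlet eigenvalue behavior'' is irrelevant to this constant. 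Consequently your final chain, which only uses $B_1$, cannot produce any $\mu < 1$. The factor $1/2$ has to come from using \emph{both} regions at once --- e.g.\ via $(\fint e^{2v})^2 \le c_1^{-2} \fint_{B_1} e^{2v} \cdot \fint_{B_2} e^{2v}$ together with disjointness of the cutoff supports, so that $\sum_\ell \fint \chi_\ell^2 |\nabla v|^2 \le \fint |\nabla v|^2$.

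Even after inserting the product step, the claim that the cutoff errors ``appear linearly rather than quadratically in $\|\nabla u\|_{L^2}$ after optimizing'' is not right: the term $\fint |\nabla\chi|^2 (u-\bar u)^2$ is, via Poincar\'e, of order $\|\nabla\chi\|_\infty^2 \fint |\nabla u|^2$, i.e.\ the \emph{same} order as the main term with a coefficient fixed by the separation distance. A naive absorption therefore yields only some $\mu_0 = \mu_0(\delta,\sum\|f_j\|_{C^1})$, not every $\mu > 1/2$ --- and the paper genuinely needs $\mu$ close to $1/2$ in the proof of Theorem~\ref{thm:Lblowup} when $\alpha$ is close to $2$. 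Aubin's original argument avoids this via symmetric decreasing rearrangement; the Chang--Yang route you mention at the end (contradiction by concentration) is another legitimate path, but the direct cutoff scheme you lay out does not close as written.
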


See also Lemma 1 of Han \cite{MR1084455}. If one takes $k = 3$ and $$f_j(x) = x_j - \frac{\fint_{\mathbb{S}^2} x_j e^{2u}}{\fint_{\mathbb{S}^2} e^{2u}}, \quad j=1,2,3,$$ then $\sum_{j=1}^3 |f_j|$ is bounded away from zero on $\mathbb{S}^2$ if and only if $|CM(u)|$ is bounded away from 1. This establishes (\ref{eq:logMTCM}) in the Introduction.

Next, Shafrir-Wolansky \cite{MR2159222} proved the following Moser-Trudinger inequality for systems:

\begin{prop}[Shafrir-Wolansky]
Suppose $A=(a_{ij})$ is a positive definite $N$ by $N$ symmetric matrix and has non-negative entries. Suppose also that $M_j > 0$ for all $j$. Then the quantity
\begin{equation} \label{eq:logMTsystemorig}
\frac{1}{2} \int_{\mathbb{S}^2} \sum_{i,j=1}^N a_{ij} (\nabla v_i, \nabla v_j)_g - \sum_{i=1}^N M_i' \log \left( \fint_{\mathbb{S}^2}  \exp \left( \sum_{j=1}^N a_{ij} (v_j - \bar{v}_j) \right) \right) 
\end{equation}
is bounded below by some finite constant when $v$ varies over all $\mathbb{R}^N$ valued maps in $\dot{H}^1(\mathbb{S}^2)$, if and only if 
\begin{equation} \label{eq:Lambdaorig}
 8\pi \sum_{i \in J} M_i' - \sum_{i,j \in J} a_{ij} M_i' M_j' \geq 0
\end{equation}
for all non-empty subsets $J$ of $\{1,\dots,N\}$. 
\end{prop}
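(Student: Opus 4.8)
The plan is to prove the two implications of the equivalence separately. Write $\Phi(v)$ for the functional in (\ref{eq:logMTsystemorig}) and, for a non-empty $J\subseteq\{1,\dots,N\}$, abbreviate $\Lambda_J := 8\pi\sum_{i\in J}M_i' - \sum_{i,j\in J}a_{ij}M_i'M_j'$, the left side of (\ref{eq:Lambdaorig}); under the change of unknowns $u=Av/2$ with $M=M'/8\pi$, $\Phi$ becomes $8\pi$ times the functional in (\ref{eq:logMTsystem}) and $\Lambda_J$ becomes $(8\pi)^2\Lambda_J(M)$, so the two displayed forms of the inequality are equivalent. Since $\Phi(v)$ depends only on the mean-zero parts $v_i-\bar v_i$, we normalize $\bar v_i=0$ throughout; the auxiliary maps $u_i:=(Av)_i$ then also have mean zero.

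\textbf{Necessity of the conditions $\Lambda_J\ge 0$.} I would argue by producing, whenever some $\Lambda_J<0$, test maps along which $\Phi\to-\infty$. Choose $J$ minimal under inclusion among those with $\Lambda_J<0$. Restricting $\Phi$ to maps with $v_i\equiv 0$ for $i\notin J$, each term $-M_i'\log\fint_{\mathbb S^2}\exp\bigl(\sum_j a_{ij}(v_j-\bar v_j)\bigr)$ with $i\notin J$ is $\le 0$ by Jensen's inequality (its exponent has mean zero and $M_i'>0$), so $\Phi$ is bounded above on such maps by the corresponding functional for the principal submatrix $A_J$ — again symmetric, positive definite, with non-negative entries — and weights $(M_i')_{i\in J}$; the relevant $\Lambda$ for that reduced problem, over its full index set, is exactly $\Lambda_J$. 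We are thus reduced to $J=\{1,\dots,N\}$, with $\Lambda_{\{1,\dots,N\}}<0$ and $\Lambda_{J'}\ge 0$ for every proper non-empty $J'$. From the elementary identity $\Lambda_J = \Lambda_{J\setminus\{i_0\}} + M_{i_0}'\bigl(8\pi - 2\sum_{j\in J}a_{i_0 j}M_j' + a_{i_0 i_0}M_{i_0}'\bigr)$ (valid for any $i_0\in J$, with $\Lambda_\emptyset:=0$), minimality forces $\sum_j a_{ij}M_j'\ge 4\pi$ for every $i$ — otherwise the bracket is positive and gives $\Lambda_J>\Lambda_{J\setminus\{i_0\}}\ge 0$, a contradiction. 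Now take $v=\tfrac{1}{8\pi}M'\phi_\lambda$, where $\phi_\lambda$ is a standard concentrating bubble on $\mathbb S^2$ (a dilated conformal factor, truncated and renormalized to mean zero), for which $\int_{\mathbb S^2}|\nabla\phi_\lambda|^2 = 32\pi\log\lambda + O(1)$ and $\log\fint_{\mathbb S^2}e^{t\phi_\lambda} = \max(4t-2,0)\log\lambda + O(1)$ as $\lambda\to\infty$. Since $(Av)_i = \tfrac{1}{8\pi}\bigl(\sum_j a_{ij}M_j'\bigr)\phi_\lambda$ with $4\cdot\tfrac{1}{8\pi}\sum_j a_{ij}M_j' - 2\ge 0$, a direct computation collapses to
$$
\Phi\bigl(\tfrac{1}{8\pi}M'\phi_\lambda\bigr) = \tfrac{1}{4\pi}\,\Lambda_{\{1,\dots,N\}}\,\log\lambda + O(1),
$$
which $\to-\infty$ as $\lambda\to\infty$ because $\Lambda_{\{1,\dots,N\}}<0$.

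\textbf{Sufficiency.} Assume all $\Lambda_J\ge 0$ and, for contradiction, $\inf\Phi=-\infty$; take $v^{(k)}$ with $\bar v^{(k)}_i=0$ and $\Phi(v^{(k)})\to-\infty$, and set $u^{(k)}=Av^{(k)}$. First, the kinetic energies $\tfrac12\int_{\mathbb S^2}\sum a_{ij}\nabla v^{(k)}_i\cdot\nabla v^{(k)}_j$ must tend to $+\infty$: otherwise $v^{(k)}$ is bounded in $\dot H^1$, and applying the scalar Moser--Trudinger inequality to each $u^{(k)}_i$ (i.e. $\log\fint e^{u^{(k)}_i-\bar u^{(k)}_i}\le\tfrac{1}{16\pi}\int|\nabla u^{(k)}_i|^2 + C$) bounds $\Phi(v^{(k)})$ from below. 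I would then carry out a Brezis--Merle-type concentration--compactness / blow-up analysis for the probability measures $e^{u^{(k)}_i}d\text{vol}_g\big/\int_{\mathbb S^2}e^{u^{(k)}_i}$, adapted to Liouville systems in the spirit of Jost--Wang, Lin--Zhang \cite{MR2580507} and Chipot--Shafrir--Wolansky \cite{MR1473855} (the non-negativity of the $a_{ij}$ enters via maximum-principle comparisons). This yields a finite blow-up set $\mathcal S\subseteq\mathbb S^2$ and, for each $p\in\mathcal S$, a non-empty set $I(p)$ of blowing-up indices, with the $u^{(k)}_i$ locally bounded above off $\mathcal S$ and, near $p$ after the correct rescaling, converging for $i\in I(p)$ to an entire finite-mass solution of $-\Delta U_i=\sum_{j\in I(p)}a_{ij}e^{U_j}$ on $\mathbb R^2$. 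The decisive input is a sharp quantization of the local masses $\sigma(p)=(\sigma_i(p))$, which follows from a Pohozaev identity for this system: $\sum_{i,j\in I(p)}a_{ij}\sigma_i(p)\sigma_j(p) = 8\pi\sum_{i\in I(p)}\sigma_i(p)$, with $\sigma_i(p)>0$ exactly for $i\in I(p)$. Accounting for the divergent part of $\Phi(v^{(k)})$ — the kinetic term contributes near each $p$ a definite quadratic form in $\sigma(p)$ times $\log\lambda_k$ which, by the mass formula and the Pohozaev relation, reduces to a multiple of $\bigl(\sum_{i\in I(p)}\sigma_i(p)\bigr)\log\lambda_k$, while $-\sum_i M_i'\log\int e^{u^{(k)}_i}$ contributes matching logarithmic rates weighted by the fraction of $i$-mass concentrating at $p$ — one arrives at a lower bound of the shape $\Phi(v^{(k)})\ge\sum_{p\in\mathcal S}c_p\,\Lambda_{I(p)} - C$, with $c_p\to+\infty$ and $C$ independent of $k$. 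Since each $\Lambda_{I(p)}\ge 0$, this contradicts $\Phi(v^{(k)})\to-\infty$.

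\textbf{The main obstacle.} The heart of the matter is the local analysis at the blow-up points, and above all the sharp quantization of the local masses of the Liouville system. For the scalar Liouville equation the classical Brezis--Merle/Li--Shafrir theory pins the local mass to the single value $8\pi$; for the system the admissible local-mass vectors fill out a quadric hypersurface, and identifying them requires a delicate Pohozaev identity together with an argument that rules out degenerate blow-up (components $u^{(k)}_i$ becoming unbounded without the corresponding mass concentrating) and a careful treatment of bubbles at different scales accumulating at the same point. An alternative route I would keep in reserve is to dualize: by Legendre duality — as in the Carlen--Loss derivation of Onofri's inequality from the logarithmic Hardy--Littlewood--Sobolev inequality — the system Moser--Trudinger inequality is equivalent to a logarithmic HLS inequality for systems, amenable to rearrangement / mass-transport methods; but the sharp constant there encodes the same quantization phenomenon, so the essential difficulty is not circumvented.
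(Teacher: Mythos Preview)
The paper does not prove this proposition: immediately after the statement it writes ``See Theorem 2 in \cite{MR2159222}'' and moves on to the reformulation (\ref{eq:logMTsystem}). There is thus no paper-proof to compare against; you have attempted a full argument where the paper only quotes the literature.

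Your necessity argument is essentially correct and complete: the reduction to a minimal violating $J$, the identity for $\Lambda_J-\Lambda_{J\setminus\{i_0\}}$, the resulting bound $\sum_j a_{ij}M_j'\ge 4\pi$ for each $i\in J$, and the one-bubble test computation all check out and give $\Phi\bigl(\tfrac{1}{8\pi}M'\phi_\lambda\bigr)=\tfrac{1}{4\pi}\Lambda_{\{1,\dots,N\}}\log\lambda+O(1)\to-\infty$ as claimed.

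The sufficiency sketch, however, has a structural gap that lies \emph{before} the obstacle you flag. You take a minimizing sequence $v^{(k)}$ for $\Phi$ and assert that, after rescaling near a concentration point, the $u^{(k)}_i$ converge to an entire solution of the Liouville system $-\Delta U_i=\sum_{j\in I(p)}a_{ij}e^{U_j}$, so that the Pohozaev mass relation becomes available. But $v^{(k)}$ satisfies no equation whatsoever: the Brezis--Merle local boundedness off the blow-up set, the convergence of rescalings to an entire solution, and hence the Pohozaev quantization, are results for sequences of \emph{solutions} (this is exactly the setting of the Jost--Wang, Lin--Zhang and Chipot--Shafrir--Wolansky papers you cite), and they are simply not available for a raw minimizing sequence in $\dot H^1$. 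Bridging this --- for instance by an Ekeland or Struwe-type argument replacing $v^{(k)}$ by approximate critical points --- is a substantial additional step that you do not supply, and it is not automatic since $\Phi$ has no obvious Palais--Smale structure in the regime where it is unbounded below. Your ``main obstacle'' paragraph locates the difficulty in proving mass quantization for entire Liouville systems, but that result is already known; the genuinely missing piece is reaching an entire Liouville system from a bare minimizing sequence in the first place.
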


See Theorem 2 in \cite{MR2159222}. Now (\ref{eq:logMTsystemorig}) can also be written as
$$
8\pi \left( \fint_{\mathbb{S}^2} \sum_{i,j=1}^N a^{ij} (\nabla u_i, \nabla u_j)_g - \sum_{i=1}^N M_i \log \left( \fint_{\mathbb{S}^2}  e^{2(u_i - \bar{u}_i)} \right) \right)
$$
if we let $2 u_i = \sum_{i=1}^N a_{ij} v_j$ and $M_i = \frac{M_i'}{8\pi}$, and under the same notation, (\ref{eq:Lambdaorig}) is equivalent to $$\Lambda_J (M) \geq 0$$ where $\Lambda_J$ is defined as in (\ref{eq:Lambdef}). Thus we recover the generalized Moser-Trudinger inequality (\ref{eq:logMTsystem}) stated in the Introduction. From this we deduce the following:

\begin{prop} \label{prop:logMTsystempositive}
Suppose $A$ is as in the previous Proposition, and $M_j > 0$ for all $j$. Suppose also that $\Lambda_J(M) > 0$ for all non-empty subsets $J$ of $\{1, \dots, N\}$. Then there are some constants $\varepsilon > 0$ and $C$ (both depending only on $A$ and $M$) such that  
\begin{equation} \label{eq:logMTsystempositive}
\fint_{\mathbb{S}^2} \sum_{i,j=1}^N a^{ij} (\nabla u_i, \nabla u_j)_g - \sum_{i=1}^N M_i \log \left( \fint_{\mathbb{S}^2}  e^{2(u_i - \bar{u}_i)} \right) \geq \varepsilon \fint_{\mathbb{S}^2} |\nabla u|^2 - C
\end{equation}
for all $u \in \dot{H}^1(\mathbb{S}^2)$ taking values in $\mathbb{R}^N$.
\end{prop}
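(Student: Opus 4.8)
The idea is to deduce the strict lower bound $\varepsilon \fint |\nabla u|^2 - C$ from the bare (non-strict) lower bound in the generalized Moser-Trudinger inequality (\ref{eq:logMTsystem}), exploiting that we have the \emph{strict} inequalities $\Lambda_J(M) > 0$ for all $J$. Write $\Phi(u)$ for the functional on the left-hand side of (\ref{eq:logMTsystempositive}). First I would observe that $\Phi$ is invariant under adding constants to the components of $u$, so we may assume $\bar u_i = 0$ for all $i$, i.e. restrict attention to $u \in \dot H^1$ with $\fint_{\mathbb{S}^2} u_i = 0$. I would also record the scaling/homogeneity structure: the quadratic term $\fint \sum a^{ij}(\nabla u_i, \nabla u_j)_g$ is $2$-homogeneous in $u$, while the logarithmic terms grow at most linearly in $\|\nabla u\|_{L^2}$ by the (scalar) inequality (\ref{eq:logMT}) applied to each component $u_i$; this already shows $\Phi(u) \to +\infty$ as $\|\nabla u\|_{L^2} \to \infty$ along any fixed direction, but not uniformly, which is exactly the point of the proposition.

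**Key steps.** The cleanest route is a perturbation argument. Since $A$ is positive definite with non-negative entries and each $M_j > 0$, and since $\Lambda_J(M) > 0$ for every non-empty $J \subseteq \{1,\dots,N\}$ (a finite collection of strict inequalities), there is a number $\varepsilon_0 > 0$ such that the perturbed vector still satisfies the hypotheses of the Shafrir-Wolansky inequality: concretely, I would show that one can choose $\varepsilon \in (0, \varepsilon_0)$ so that the matrix $\tilde A := A(1 - c\varepsilon)^{-1}$ (equivalently, scaling $a^{ij} \mapsto (1-c\varepsilon) a^{ij}$) together with the \emph{same} $M$ still has $\tilde A$ positive definite with non-negative entries and still satisfies $\tilde\Lambda_J(M) \ge 0$ for all $J$, where $\tilde\Lambda_J$ uses $\tilde a_{ij} = (1-c\varepsilon)a_{ij}$. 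Indeed $\tilde\Lambda_J(M) = \sum_{j\in J} M_j - (1-c\varepsilon)\sum_{i,j\in J} a_{ij}M_iM_j \ge \Lambda_J(M) > 0$ for $\varepsilon \ge 0$ since the quadratic term has a non-negative coefficient (this is where $a_{ij} \ge 0$ and $M_j > 0$ enter, giving $\sum_{i,j\in J} a_{ij}M_iM_j \ge 0$). Hence by (\ref{eq:logMTsystem}) applied with $a^{ij}$ replaced by $(1-c\varepsilon)a^{ij}$, there is a finite constant $C_1$ with
$$
(1-c\varepsilon)\fint_{\mathbb{S}^2} \sum_{i,j=1}^N a^{ij}(\nabla u_i, \nabla u_j)_g - \sum_{i=1}^N M_i \log\left(\fint_{\mathbb{S}^2} e^{2(u_i-\bar u_i)}\right) \ge -C_1.
$$
Rearranging gives $\Phi(u) \ge c\varepsilon \fint \sum_{i,j} a^{ij}(\nabla u_i,\nabla u_j)_g - C_1$, and since $A$ (hence $A^{-1}$) is positive definite, $\fint \sum_{i,j} a^{ij}(\nabla u_i,\nabla u_j)_g \ge \lambda \fint |\nabla u|^2$ where $\lambda > 0$ is the smallest eigenvalue of $A^{-1}$. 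Taking the constant in front to be $\varepsilon' := c\varepsilon\lambda$ and $C := C_1$ yields (\ref{eq:logMTsystempositive}), after relabelling $\varepsilon' \to \varepsilon$.

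**Main obstacle.** The only real subtlety is confirming that the perturbed Moser-Trudinger inequality is genuinely available — that is, that (\ref{eq:logMTsystem})/(\ref{eq:logMTsystemorig}) applies verbatim with $A$ replaced by $(1-c\varepsilon)^{-1}A$. This requires checking that $(1-c\varepsilon)^{-1}A$ is still positive definite (immediate for small $\varepsilon$, since positive definiteness is an open condition and scaling by a positive constant preserves it) and still has non-negative entries (immediate, as scaling by a positive constant preserves the sign of entries), and that $\tilde\Lambda_J(M) \ge 0$, which I verified above holds for \emph{all} $\varepsilon \ge 0$ precisely because of the sign conditions already assumed. So no genuine difficulty arises; the argument is a soft consequence of the non-strict inequality plus the strictness of the $\Lambda_J(M)$. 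An alternative, slightly more hands-on approach would split into two regimes — for $\|\nabla u\|_{L^2}$ bounded, (\ref{eq:logMTsystem}) already gives a lower bound and $\varepsilon\fint|\nabla u|^2$ is bounded above, while for $\|\nabla u\|_{L^2}$ large one rescales $u \mapsto su$ and uses $2$-homogeneity of the quadratic term against the at-most-linear growth of the log terms — but the perturbation argument is cleaner and I would present that.
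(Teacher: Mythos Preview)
Your perturbation argument is exactly the strategy the paper uses: slightly modify $A$ so that the Shafrir--Wolansky hypothesis still holds (by openness of the finitely many strict conditions $\Lambda_J(M)>0$), apply (\ref{eq:logMTsystem}) with the modified matrix, and absorb the leftover positive-definite quadratic form. The paper perturbs additively (replacing $A$ by $A-2\varepsilon\,\mathrm{Id}$) while you scale multiplicatively, but the mechanism is identical.

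There is one algebraic slip to correct. If $\tilde A = A(1-c\varepsilon)^{-1}$, then the entries of $\tilde A$ are $\tilde a_{ij}=a_{ij}/(1-c\varepsilon)$, not $(1-c\varepsilon)a_{ij}$; consequently
\[
\tilde\Lambda_J(M)=\sum_{j\in J}M_j-\frac{1}{1-c\varepsilon}\sum_{i,j\in J}a_{ij}M_iM_j\le\Lambda_J(M),
\]
the reverse of what you wrote, so the claim ``$\tilde\Lambda_J(M)\ge\Lambda_J(M)$ for all $\varepsilon\ge0$'' is false. This does not damage the argument: since the $\Lambda_J(M)$ are strictly positive and there are only finitely many subsets $J$, one still has $\tilde\Lambda_J(M)\ge0$ for $\varepsilon$ sufficiently small---which is exactly the openness you already invoked when introducing $\varepsilon_0$. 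With that correction, your derivation of
\[
(1-c\varepsilon)\fint_{\mathbb{S}^2}\sum_{i,j} a^{ij}(\nabla u_i,\nabla u_j)_g-\sum_i M_i\log\Bigl(\fint_{\mathbb{S}^2}e^{2(u_i-\bar u_i)}\Bigr)\ge-C_1
\]
and the subsequent rearrangement using the smallest eigenvalue of $A^{-1}$ go through as written.
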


In fact, the condition $\Lambda_J(M) > 0$ for all non-empty $J \subset \{1, \dots, N\}$ is an open condition. Thus one is led to define $A' = A - 2\varepsilon Id$ where $Id$ is the identity matrix and $\varepsilon > 0$ is some small constant. Suppose $\varepsilon$ is sufficiently small. Then $A'$ is still positive definite symmetric with non-negative entries, and if $\Lambda'_J(M)$ is defined in the same way as $\Lambda_J(M)$, except one replaces entries of $A$ by the corresponding entries of $A'$, then one still has $\Lambda'_J(M) > 0$ for all non-empty $J \subseteq \{1, \dots, N\}$. Thus by the previous assertion,
$$
\fint_{\mathbb{S}^2} \sum_{i,j=1}^N (a')^{ij} (\nabla u_i, \nabla u_j)_g - \sum_{i=1}^N M_i \log \left( \fint_{\mathbb{S}^2}  e^{2(u_i - \bar{u}_i)} \right) \geq - C
$$
where $(a')^{ij}$ are the entries of $A'$. Now $A^{-1} = (A')^{-1} + 2\varepsilon Id + O(\varepsilon^2)$. Thus if $\varepsilon$ is sufficiently small, (\ref{eq:logMTsystempositive}) follows.

We also need the following concentration lemma of Chang-Yang \cite{MR925123} (see Proposition A there). Let $$S[u] =  \fint_{\mathbb{S}^2} |\nabla u|^2 + 2 \fint_{\mathbb{S}^2} u $$ for any function $u \in \dot{H}^1(\mathbb{S}^2)$.

\begin{prop}[Chang-Yang] \label{lem:CY}
Suppose $u_j \in \dot{H}^1(\mathbb{S}^2)$ a sequence of functions with $\fint_{\mathbb{S}^2} e^{2u_j} = 1$ and $\sup_j S[u_j] = C < \infty,$ we have either $$\sup_j \fint_{\mathbb{S}^2} |\nabla u_j|^2 = C' < \infty,$$ or there exists a point $p \in \mathbb{S}^2$ and a subsequence of $u_j$ (which we still denote by $u_j$) such that for any $\varepsilon > 0$, we have $$\lim_{j \to \infty}  \frac{1}{4\pi} \int_{B(p,\varepsilon)} e^{2u_j} \geq 1-\varepsilon.$$ Here $B(p,\varepsilon)$ is the geodesic ball on $\mathbb{S}^2$ centered at $p$ and of radius $\varepsilon$.
\end{prop}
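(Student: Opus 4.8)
The plan is to run a concentration--compactness dichotomy for the probability measures $d\mu_j := \tfrac{1}{4\pi}\, e^{2u_j}\, d\text{vol}_g$ on $\mathbb{S}^2$ (these have total mass $\fint_{\mathbb{S}^2} e^{2u_j}=1$), using the improved Moser--Trudinger inequality (\ref{eq:logMTCM}), i.e. the center-of-mass case of Proposition~\ref{lem:Aubin}, as the key analytic input. First I would dispose of the trivial alternative: if $\sup_j \fint_{\mathbb{S}^2}|\nabla u_j|^2 < \infty$ we are done, so assume this fails and pass to a subsequence (still written $u_j$) along which $\fint_{\mathbb{S}^2}|\nabla u_j|^2 \to \infty$. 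Since $\fint_{\mathbb{S}^2} e^{2u_j}=1$ forces $2\bar{u}_j = -\log\fint_{\mathbb{S}^2} e^{2(u_j-\bar u_j)}$, the functional can be rewritten as
$$
S[u_j] = \fint_{\mathbb{S}^2}|\nabla u_j|^2 - \log\fint_{\mathbb{S}^2} e^{2(u_j-\bar u_j)}.
$$

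The heart of the argument is to show that $|CM(u_j)| \to 1$. Suppose not; then, since $|CM(u_j)|\le 1$ always, after passing to a further subsequence we have $|CM(u_j)| \le 1-\delta$ for some fixed $\delta > 0$ (while still $\fint_{\mathbb{S}^2}|\nabla u_j|^2 \to \infty$). Applying (\ref{eq:logMTCM}) with $\mu = \tfrac34$ gives
$$
\log\fint_{\mathbb{S}^2} e^{2(u_j-\bar u_j)} \le \tfrac34 \fint_{\mathbb{S}^2}|\nabla u_j|^2 + \log C(\delta),
$$
and substituting this into the identity for $S[u_j]$ yields $S[u_j] \ge \tfrac14 \fint_{\mathbb{S}^2}|\nabla u_j|^2 - \log C(\delta) \to \infty$, contradicting $\sup_j S[u_j] < \infty$. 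Hence $|CM(u_j)| \to 1$.

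Finally, since $|CM(u_j)| \le 1$ always and now $|CM(u_j)| \to 1$, I can pass to one more subsequence so that $CM(u_j) \to p$ for some $p \in \mathbb{S}^2$. Using $CM(u_j) = \int_{\mathbb{S}^2} x\, d\mu_j$ together with $|x| = |p| = 1$ on $\mathbb{S}^2$, one computes $\int_{\mathbb{S}^2}|x-p|^2\, d\mu_j = 2 - 2\,CM(u_j)\cdot p \to 0$. For fixed $\varepsilon \in (0,\pi]$ the chordal distance $|x-p|$ is bounded below by a positive constant $c(\varepsilon)$ on $\mathbb{S}^2 \setminus B(p,\varepsilon)$ (for $\varepsilon > \pi$ the ball is all of $\mathbb{S}^2$ and the assertion is trivial), so Chebyshev's inequality gives $\mu_j(\mathbb{S}^2 \setminus B(p,\varepsilon)) \le c(\varepsilon)^{-2}\int_{\mathbb{S}^2}|x-p|^2\, d\mu_j \to 0$; that is, $\tfrac{1}{4\pi}\int_{B(p,\varepsilon)} e^{2u_j} = \mu_j(B(p,\varepsilon)) \to 1 \ge 1-\varepsilon$, which is the second alternative. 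The same subsequence works for every $\varepsilon$, as required.

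I expect the only genuinely substantive point to be the middle step, the appeal to Aubin's improved inequality to prevent the center of mass from lingering away from $\mathbb{S}^2$; everything else is soft measure-theoretic bookkeeping. An equivalent route avoids the center of mass altogether: extract a weak-$*$ limit $\mu$ of the $\mu_j$; if $\mu$ is not a Dirac mass it charges two disjoint geodesic balls, each with a fixed positive fraction of the mass, and the two-bump form of Aubin's inequality (which replaces the factor $1$ in (\ref{eq:logMT}) by $\tfrac12+\varepsilon$) again forces $S[u_j]\to\infty$; hence $\mu=\delta_p$ and $\mu_j(B(p,\varepsilon))\to 1$. The center-of-mass formulation is just a convenient packaging of the same idea that matches the inequality already recorded above.
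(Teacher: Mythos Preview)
The paper does not give its own proof of this proposition; it is quoted verbatim from Chang--Yang (Proposition~A of \cite{MR925123}) and used as a black box in the proof of Theorem~\ref{thm:Lblowup}. So there is no in-paper argument to compare against.

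Your proof is correct. The rewriting $S[u_j]=\fint|\nabla u_j|^2-\log\fint e^{2(u_j-\bar u_j)}$ follows immediately from $\fint e^{2u_j}=1$, and the contradiction step via the center-of-mass form (\ref{eq:logMTCM}) of Aubin's inequality with $\mu=\tfrac34$ is exactly the right analytic input. The final step, computing $\int_{\mathbb{S}^2}|x-p|^2\,d\mu_j=2-2\,CM(u_j)\cdot p\to 0$ and applying Chebyshev, is a clean way to upgrade $CM(u_j)\to p\in\mathbb{S}^2$ to $\mu_j(B(p,\varepsilon))\to 1$; it indeed works for all $\varepsilon$ along a single subsequence. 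The alternative two-bump route you sketch at the end (weak-$*$ limit not a Dirac mass $\Rightarrow$ mass splits over two balls $\Rightarrow$ improved constant $\Rightarrow$ $S[u_j]\to\infty$) is closer in spirit to how such concentration lemmas are usually proved and to what one finds in Chang--Yang; your center-of-mass packaging is a tidy shortcut that exploits the specific inequality already recorded in the paper.
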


Finally, we need the following elementary result about the solution of wave equations on the sphere. To state this, first recall that every $L^2$ function on $\mathbb{S}^2$ can be decomposed as a convergent sum of eigenfunctions of the Laplacian $\Delta_g$ on $\mathbb{S}^2$. Using this, one can define the spectral multiplier $\cos(\sqrt{-\Delta_g})$ on $L^2$ functions on $\mathbb{S}^2$, as well as $\frac{\sin(\sqrt{-\Delta_g})}{\sqrt{-\Delta_g}}$ on $L^2$ functions on $\mathbb{S}^2$ whose integral is zero. These operators solve the initial value problem 
\begin{equation} \label{eq:wave}
\begin{cases}
\partial_t^2 v - \Delta_g v = f \\
v(0) = u_0, \quad \partial_t v(0) = u_1
\end{cases}
\end{equation}
on $[0,\infty) \times \mathbb{S}^2$ via the following Duhamel formula:

\begin{prop} \label{prop:wave}
If $u_0 \in \dot{H}^1(\mathbb{S}^2)$, $u_1 \in L^2(\mathbb{S}^2)$, $f \in L^1_t L^2_x ([0,T) \times \mathbb{S}^2)$ with $$\int_{\mathbb{S}^2} u_1 = 0 = \int_{\mathbb{S}^2} f(s) \quad \text{for all $s \in [0,T)$},$$ then (\ref{eq:wave}) has a unique solution $v(t,x) \in C^0_t \dot{H}^1_x \cap C^1_t L^2_x ([0,T) \times \mathbb{S}^2)$ given by 
$$
v(t,x) = \cos(t\sqrt{-\Delta_g})u_0 + \frac{\sin(t\sqrt{-\Delta_g})}{\sqrt{-\Delta_g}} u_1 + \int_0^t \frac{\sin((t-s)\sqrt{-\Delta_g})}{\sqrt{-\Delta_g}} f(s) ds.
$$
Furthermore, the solution $v(t,x)$ satisfies
$$
\|v\|_{C^0_t \dot{H}^1_x} + \|\partial_t v\|_{C^0_t L^2_x} \leq 2 (\|u_0\|_{\dot{H}^1} + \|u_1\|_{L^2} + \|f\|_{L^1_t L^2_x}).
$$
All these hold as well if $u_0$, $u_1$ and $f(s,\cdot)$ all takes value in $\mathbb{R}^N$ for some $N$.
\end{prop}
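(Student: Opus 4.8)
The plan is to verify everything by the standard spectral/Fourier-series method on $\mathbb{S}^2$, since all the operators involved are defined through the eigenfunction expansion of $-\Delta_g$. Write $-\Delta_g \phi_k = \lambda_k \phi_k$ with $0 = \lambda_0 < \lambda_1 \le \lambda_2 \le \cdots$ for an orthonormal basis $\{\phi_k\}$ of $L^2(\mathbb{S}^2)$ (in the vector-valued case one works componentwise, so it suffices to treat $N=1$). Expand $u_0 = \sum_k a_k \phi_k$, $u_1 = \sum_k b_k \phi_k$, $f(s) = \sum_k c_k(s)\phi_k$; the hypotheses $\int u_1 = \int f(s) = 0$ say exactly $b_0 = 0$ and $c_0(s) \equiv 0$, so the operators $\frac{\sin(t\sqrt{-\Delta_g})}{\sqrt{-\Delta_g}}$ applied to $u_1$ and to $f(s)$ are well defined (no division by $\lambda_0 = 0$). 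Then the candidate solution has the explicit expansion
$$
v(t) = \sum_k \left( a_k \cos(t\sqrt{\lambda_k}) + b_k \frac{\sin(t\sqrt{\lambda_k})}{\sqrt{\lambda_k}} + \int_0^t c_k(s)\frac{\sin((t-s)\sqrt{\lambda_k})}{\sqrt{\lambda_k}}\,ds \right)\phi_k,
$$
with the $k=0$ term read as $a_0 + b_0 t + \int_0^t (t-s) c_0(s)\,ds = a_0$.

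Next I would establish the a priori bound and the regularity $v \in C^0_t\dot H^1_x \cap C^1_t L^2_x$ first, since this simultaneously makes sense of the series and of the equation. Define the mode-wise energy $e_k(t) = \lambda_k |v_k(t)|^2 + |\dot v_k(t)|^2$, where $v_k(t)$ is the $k$-th coefficient above. Using $\frac{d}{dt}(\lambda_k|v_k|^2 + |\dot v_k|^2) = 2\dot v_k(\ddot v_k + \lambda_k v_k) = 2\dot v_k c_k$ and $|\dot v_k| \le e_k^{1/2}$, Grönwall (or just direct integration of $\frac{d}{dt}e_k^{1/2} \le |c_k|$) gives $e_k(t)^{1/2} \le e_k(0)^{1/2} + \int_0^t |c_k(s)|\,ds$, hence
$$
e_k(t)^{1/2} \le \big(\lambda_k |a_k|^2 + |b_k|^2\big)^{1/2} + \int_0^T |c_k(s)|\,ds.
$$
Squaring, summing in $k$, and applying Minkowski's integral inequality to the last term (to pull the $\ell^2_k$ norm inside the $s$-integral) yields
$$
\Big(\sum_k e_k(t)\Big)^{1/2} \le \|u_0\|_{\dot H^1} + \|u_1\|_{L^2} + \|f\|_{L^1_t L^2_x},
$$
which is even a factor $2$ better than claimed; the factor $2$ in the statement is harmless slack. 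This shows the partial sums of $v(t)$ and $\partial_t v(t)$ are Cauchy in $\dot H^1_x$ and $L^2_x$ uniformly in $t\in[0,T)$; continuity in $t$ of each term is clear, so $v \in C^0_t\dot H^1_x \cap C^1_t L^2_x$ and the stated estimate holds.

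Then I would check that $v$ actually solves (\ref{eq:wave}): each coefficient $v_k$ solves the ODE $\ddot v_k + \lambda_k v_k = c_k(s)$ with $v_k(0)=a_k$, $\dot v_k(0)=b_k$ by the classical variation-of-parameters formula, and the uniform convergence established above lets one identify $\sum_k v_k \phi_k$ as a distributional (indeed $C^0_t\dot H^1_x\cap C^1_tL^2_x$) solution of $\partial_t^2 v - \Delta_g v = f$ with the right data; the initial conditions hold in the appropriate topologies because $v_k(0) = a_k$, $\dot v_k(0) = b_k$ for every $k$. For uniqueness, suppose $w \in C^0_t\dot H^1_x \cap C^1_t L^2_x$ solves (\ref{eq:wave}) with zero data and $f=0$; projecting onto $\phi_k$ shows each coefficient $w_k$ solves $\ddot w_k + \lambda_k w_k = 0$ with zero initial data, hence $w_k \equiv 0$, so $w \equiv 0$. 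Finally, the vector-valued case is immediate by applying the scalar argument to each of the $N$ components. The only mild technical point — and the closest thing to an obstacle — is justifying the termwise projection/differentiation for merely $C^0_t\dot H^1_x\cap C^1_tL^2_x$ solutions and the use of Minkowski's integral inequality for $\|f\|_{L^1_tL^2_x}$; both are routine once one works with the Fourier coefficients and the continuity in $t$ granted by the hypotheses, so I expect no real difficulty.
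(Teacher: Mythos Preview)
Your proposal is correct and is precisely the standard spectral argument the paper has in mind: the paper states this proposition as an ``elementary result'' and does not supply a proof, merely indicating (in the paragraph preceding the proposition) that the operators are defined via the eigenfunction expansion of $-\Delta_g$. Your mode-wise energy estimate with Minkowski is the expected verification, and your observation that the constant can be taken as $\sqrt{2}$ rather than $2$ is also right; the $2$ in the statement is just a convenient upper bound.
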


\section{Liouville's equation}

It is clear that Theorem~\ref{thm:localexistence} follows from Theorem~\ref{thm:localexistencesystem}. So it suffices to prove Theorem~\ref{thm:localexistencesystem}.

\begin{proof}[Proof of Theorem~\ref{thm:localexistencesystem}]
The proof proceeds by a standard fixed point argument. Suppose the initial data $u_0$ and $u_1$ are given as above. Let 
$$
R = 3(\|u_0\|_{\dot{H}^1} + \|u_1\|_{L^2}),
$$
and
$$
I = \fint_{\mathbb{S}^2} u_{0}.
$$
Let $B_T$ be the collection of all $\mathbb{R}^N$-valued functions $u \in C^0\dot{H}^1 \cap C^1L^2 ([0,T] \times \mathbb{S}^2)$ satisfying $$\fint_{\mathbb{S}^2} u(s) = I \quad \text{for all $s \in [0,T]$},$$ and $$\|u\|_{C^0\dot{H}^1([0,T] \times \mathbb{S}^2)} + \|\partial_t u\|_{C^0L^2([0,T] \times \mathbb{S}^2)} \leq R.$$ For $u \in B_T$, one can solve the initial value problem
$$
\begin{cases}
\partial_t^2 v - \Delta_g v = A \left(\frac{M e^{2u}}{\fint_{\mathbb{S}^2} e^{2u}} - M\right) \\
v(0) = u_0, \quad \partial_t v(0) = u_1
\end{cases}
$$
on $[0,T] \times \mathbb{S}^2$, since both $u_1$ and the right hand side of the first equation have integral zero on $\mathbb{S}^2$. Now we claim $v \in B_T$ if $T$ is sufficiently small: in fact by Proposition~\ref{prop:wave},
\begin{align*}
&\|v\|_{C^0\dot{H}^1([0,T] \times \mathbb{S}^2)} + \|\partial_t v\|_{C^0L^2([0,T] \times \mathbb{S}^2)} \\
\leq & 2\|u_0\|_{\dot{H}^1} + 2\|u_1\|_{L^2} + 
2\beta \int_0^T \|\left(\frac{M e^{2u}}{\fint_{\mathbb{S}^2} e^{2u}} - M\right)(s)\|_{L^2} ds
\end{align*}
where $\beta$ is the operator norm of $A$. The last integral is bounded by $$CT|M| + \sum_{j=1}^N CT|M|e^{-2I_j} \max_{s \in [0,T]} \|e^{2u_j(s)}\|_{L^2}$$
where $I_j = \bar{u}_j$ is the $j$-th component of $I$, since $$\fint_{\mathbb{S}^2} e^{2u_j} \geq \exp\left(\fint_{\mathbb{S}^2} 2u_j \right) = e^{2I_j}.$$
But for $s \in [0,T]$, 
\begin{align}
\frac{1}{4\pi} \|e^{2u_j(s)}\|_{L^2}^2 
&= \fint_{\mathbb{S}^2} e^{4u_j(s)} \notag \\ 
&= \fint_{\mathbb{S}^2} e^{4(u_j-\bar{u_j})(s)} e^{4 I_j} \notag \\
&\leq C \exp\left(\fint_{\mathbb{S}^2}  |\nabla (2 u_j)|^2(s) \right) e^{4 I_j} \notag \\
&\leq C e^{4 R^2} e^{4I_j}. \label{eq:euL2}
\end{align}
The second to last inequality follows from the Moser-Trudinger inequality on the sphere.
Hence 
\begin{align*}
&\|v\|_{C^0\dot{H}^1([0,T] \times \mathbb{S}^2)} + \|\partial_t v\|_{C^0L^2([0,T] \times \mathbb{S}^2)} \\
\leq & 2\|u_0\|_{\dot{H}^1} + 2\|u_1\|_{L^2} + 2C\beta T|M| + 2C\beta T |M| e^{2R^2}.
\end{align*}
If $T$ is sufficiently small, depending only on $\beta$, $|M|$ and the norms of the initial data, then the above is bounded by $R$. Also, $\fint_{\mathbb{S}^2} v(s) = I$ for all $s \in [0,T]$. Thus $v \in B_T$ if $T$ is as such. This defines a map $F \colon B_T \to B_T$ given by $u \mapsto v$. We show that by further shrinking $T$ if necessary, this map is a contraction.

Suppose $u^{(1)}$, $u^{(2)}$ are in $B_T$, and $v^{(1)} = F(u^{(1)})$, $v^{(2)} = F(u^{(2)})$. Then $v := v^{(1)} - v^{(2)}$ satisfies
$$
\begin{cases}
\partial_t^2 v - \Delta v = A \left(\frac{M e^{2u^{(1)}}}{\fint_{\mathbb{S}^2} e^{2u^{(1)}} } - \frac{M e^{2u^{(2)}}}{\fint_{\mathbb{S}^2} e^{2u^{(2)}} }\right) \\
v(0) = 0, \quad \partial_t v(0) = 0.
\end{cases}
$$
Hence 
\begin{align*}
&\|v\|_{C^0\dot{H}^1([0,T] \times \mathbb{S}^2)} + \|\partial_t v\|_{C^0L^2([0,T] \times \mathbb{S}^2)} \\
\leq & 
2\beta \int_0^T \left\|\left(\frac{M e^{2u^{(1)}}}{\fint_{\mathbb{S}^2} e^{2u^{(1)}}} - \frac{M e^{2u^{(2)}}}{\fint_{\mathbb{S}^2} e^{2u^{(2)}} }\right)(s)\right\|_{L^2} ds.
\end{align*}
But for $s \in [0,T]$ and $j =1, \dots, N$,
\begin{align*}
&\left\|\left(\frac{e^{2u^{(1)}_j}}{\fint_{\mathbb{S}^2} 2e^{u^{(1)}_j}} - \frac{e^{2u^{(2)}_j}}{\fint_{\mathbb{S}^2} e^{2u^{(2)}_j} }\right)(s)\right\|_{L^2} \\
\leq &
\left\| \frac{e^{2u^{(1)}_j}-e^{2u^{(2)}_j}}{\fint_{\mathbb{S}^2} e^{2u^{(1)}_j} } \right\|_{L^2}(s) + \left\| \frac{e^{2u^{(2)}_j} \left(\fint_{\mathbb{S}^2} e^{2u^{(1)}_j}  - \fint_{\mathbb{S}^2} e^{2u^{(2)}_j} \right) }{\left(\fint_{\mathbb{S}^2} e^{2u^{(1)}_j} \right) \left( \fint_{\mathbb{S}^2} e^{2u^{(2)}_j}  \right) }\right\|_{L^2}(s).
\end{align*}
The first term is bounded by
\begin{align*}
& e^{-2I_j} \|(2u^{(1)}_j(s)-2u^{(2)}_j(s))(e^{2u^{(1)}_j(s)} + e^{2u^{(2)}_j(s)})\|_{L^2} \\
\leq & e^{-2I_j} \|2u^{(1)}_j(s)-2u^{(2)}_j(s)\|_{L^4} \left(\|e^{2u^{(1)}_j(s)}\|_{L^4} + \|e^{2u^{(2)}_j(s)}\|_{L^4}\right).
\end{align*}
Using Sobolev inequality for the first factor (since $\int_{\mathbb{S}^2} (u^{(1)}_j(s)-u^{(2)}_j(s)) = 0$ for all $s$) and use
$$
\frac{1}{4\pi} \|e^{2u^{(k)}_j(s)}\|_{L^4}^4 = \fint_{\mathbb{S}^2} e^{8(u^{(k)}_j(s)-I_j)} e^{8I_j} \leq C \exp\left(\fint_{\mathbb{S}^2} | \nabla (4u^{(k)}_j)|^2 (s) \right) e^{8 I_j}
\leq C e^{16 R^2} e^{8 I_j}
$$
for the second factor, we bound this by
$$
C e^{4 R^2} \|u^{(1)}(s)-u^{(2)}(s)\|_{\dot{H}^1}.
$$
For the second term, we bound that by
\begin{align*}
& C e^{-2I_j} e^{-2I_j} \|e^{2u^{(2)}_j(s)}\|_{L^2} \int_{\mathbb{S}^2} |2u^{(1)}_j(s)-2u^{(2)}_j(s)| (e^{2u^{(1)}_j(s)} + e^{2u^{(2)}_j(s)}) \\
\leq & C e^{-2I_j} e^{-2I_j} \|e^{2u^{(2)}_j(s)}\|_{L^2} (\|e^{2u^{(1)}_j(s)}\|_{L^2} + \|e^{2u^{(2)}_j(s)}\|_{L^2}) \|2u^{(1)}_j(s)-2u^{(2)}_j(s)\|_{L^2} \\
\leq & C e^{4R^2} \|u^{(1)}(s)-u^{(2)}(s)\|_{\dot{H}^1} 
\end{align*}
since 
$$\|e^{2u^{(k)}_j(s)}\|_{L^2} \leq C e^{2R^2} e^{2I_j}$$
by (\ref{eq:euL2}), and we can use Sobolev inequality for the last term. It follows that
$$
\|v\|_{C^0\dot{H}^1([0,T] \times \mathbb{S}^2)} + \|\partial_t v\|_{C^0L^2([0,T] \times \mathbb{S}^2)} \leq 
2 C\beta T |M| e^{4R^2} \|u^{(1)}-u^{(2)}\|_{C^0\dot{H}^1([0,T] \times \mathbb{S}^2)}.
$$
Hence if $T$ is sufficiently small with respect to $\beta$, $|M|$ and the norms of the initial data, then the map $F$ we defined above is a contraction map. The contraction mapping principle then says that $F$ has a fixed point, which gives the desired solution to the initial value problem in the theorem.
\end{proof}

It is now a standard matter to modify the above proof and show that the initial value problem in Theorem~\ref{thm:localexistence} or Theorem~\ref{thm:localexistencesystem} is locally well-posed, whose detail we omit. Thus to check conservation of energy for the solution $u$ in Theorem~\ref{thm:localexistencesystem}, we may assume without loss of generality that $u$ is smooth, in which case one can differentiate
%\begin{align*}
%E(u(t)) &= \fint_{\mathbb{S}^2} \left(|\partial_t u|^2 + |\nabla u|^2 \right) - \alpha \log \left(\fint_{\mathbb{S}^2} e^{2(u-\bar{u})} \right)\\
%&=  \fint_{\mathbb{S}^2} \left(|\partial_t u|^2 + |\nabla u|^2 \right) - \alpha \left( \log \left(\fint_{\mathbb{S}^2} e^{2u} \right) -  2 \bar{u} \right)
%\end{align*}
%and get
%\begin{align*}
%\partial_t E(u(t)) &= \fint_{\mathbb{S}^2} (2 \partial_t u \cdot \partial_t^2 u + 2  \nabla (\partial_t u) \cdot \nabla u) - \alpha \frac{ \fint_{\mathbb{S}^2} 2(\partial_t u) e^{2u} }{\fint_{\mathbb{S}^2} e^{2u}}. 
%\end{align*}
%The first term is equal to
%$$
%\fint_{\mathbb{S}^2} 2 \partial_t u (\partial_t^2 u - \Delta_g u) 
%= \alpha \fint_{\mathbb{S}^2} 2\partial_t u \left( \frac{e^{2u} }{\fint_{\mathbb{S}^2} e^{2u}} \right)
%$$
%by equation (\ref{eq:scalarLiou}), since $$\fint_{\mathbb{S}^2} 2\partial_t u \cdot 1 = 2 \partial_t \fint_{\mathbb{S}^2} u = 0$$ by Theorem~\ref{thm:localexistence}. This cancels with the second term, and thus $\partial_t E(u(t)) = 0$. 
%
%A similar argument proves conservation of energy for the system (\ref{eq:waveL}): in fact then the energy is 
\begin{align*}
E(u(t)) &= \fint_{\mathbb{S}^2} \sum_{i,j=1}^N a^{ij} ((\partial_t u_i)(\partial_t u_j) + (\nabla u_i, \nabla u_j)_g) - \sum_{i=1}^N M_i \log \left( \fint_{\mathbb{S}^2}  e^{2(u_i-\bar{u}_i)} \right) \\
&=\fint_{\mathbb{S}^2} \sum_{i,j=1}^N a^{ij} ((\partial_t u_i)(\partial_t u_j) + (\nabla u_i, \nabla u_j)_g) - \sum_{i=1}^N M_i \log \left( \fint_{\mathbb{S}^2}  e^{2u_i} \right) + \sum_{i=1}^N 2 M_i \bar{u}_i
\end{align*}
and get
\begin{align*}
\partial_t E(u(t)) 
= \fint_{\mathbb{S}^2} \sum_{i,j=1}^N 2 a^{ij} ((\partial_t u_i)(\partial_t^2 u_j) + (\nabla \partial_t u_i, \nabla u_j)_g) -  \sum_{i=1}^N M_i \frac{\fint_{\mathbb{S}^2} 2 (\partial_t u_i) e^{2u_i}}{\fint_{\mathbb{S}^2}  e^{2u_i}}.
\end{align*}
The first integral is equal to
\begin{align*}
&\fint_{\mathbb{S}^2} \sum_{i,j=1}^N 2 a^{ij} (\partial_t u_i)(\partial_t^2 u_j - \Delta_g u_j) \\
=& \fint_{\mathbb{S}^2} \sum_{i,j=1}^N 2 a^{ij} (\partial_t u_i) \sum_{k = 1}^N a_{jk} M_k \left( \frac{e^{2u_k}}{\fint_{\mathbb{S}^2} e^{2u_k}} - 1 \right)\\
=& \fint_{\mathbb{S}^2} \sum_{i=1}^N 2 (\partial_t u_i) M_i \left( \frac{e^{2u_i}}{\fint_{\mathbb{S}^2} e^{2u_i}} - 1 \right)\\
=& \fint_{\mathbb{S}^2} \sum_{i=1}^N 2 (\partial_t u_i) M_i \frac{e^{2u_i}}{\fint_{\mathbb{S}^2} e^{2u_i}}\\
\end{align*}
by equation (\ref{eq:waveL}), since $$\fint_{\mathbb{S}^2} 2(\partial_t u_i) M_i = 2 M_i \partial_t \fint_{\mathbb{S}^2} u_i = 0$$ by Theorem~\ref{thm:localexistencesystem}. This cancels with the second term, and thus $\partial_t E(u(t)) = 0$. Similarly one proves conservation of energy for the solution $u$ in Theorem~\ref{thm:localexistence}.

\begin{proof}[Proof of Theorem~\ref{thm:globalexistence}]
Fix $\alpha < 1$. We only need to show that if for some $T > 0$, $u \in C^0\dot{H}^1 \cap C^1L^2 ([0,T) \times \mathbb{S}^2)$ satisfy
$$
\partial_t^2 u - \Delta u = \alpha \left(\frac{e^{2u}}{\fint_{\mathbb{S}^2} e^{2u} } - 1 \right) \quad \text{on $[0,T) \times \mathbb{S}^2$}
$$
with initial data $u(0) = u_0$, $u_t(0) = u_1$, then
$$
\|u\|_{C^0\dot{H}^1([0,T) \times \mathbb{S}^2)} + \|\partial_t u\|_{C^0L^2([0,T) \times \mathbb{S}^2)} \leq B
$$
for some number $B$ that depends only on $\alpha$, $\|u_0\|_{\dot{H}^1}$ and $\|u_1\|_{L^2}$ (because if this is true, then one can extend the solution for a fixed amount of time beginning at any $t \in [0,T)$, which in particular says that the solution extends beyond time $T$).

Now we need only conservation of energy and the Moser-Trudinger inequality (more precisely, its corollary as in (\ref{eq:logMT})) to accomplish this. Recall that
$$
E(u(t)) = \fint_{\mathbb{S}^2} \left(|\partial_t u|^2 + |\nabla u|^2 \right) - \alpha \log \left(\fint_{\mathbb{S}^2} e^{2(u-\bar{u})} \right)
$$
is conserved over time. But (\ref{eq:logMT}) implies
$$
\log \left(\fint_{\mathbb{S}^2} e^{2(u_j-\bar{u}_j)} \right) \leq \fint_{\mathbb{S}^2} |\nabla u_j|^2 + \log C.
$$
at any time $t$. Hence if $\alpha \in [0,1)$, then at every time $t \in [0,T)$, we have
$$
\fint_{\mathbb{S}^2} \left( |\partial_t u(t)|^2 + (1-\alpha) |\nabla u(t)|^2 \right) \leq  E(u(t)) + \alpha \log C = E(u(0)) + \alpha \log C,
$$
the last inequality following from conservation of energy. The left-hand side is bounded below by a constant times $\|\partial_t u(t)\|_{L^2}^2 + \|\nabla u(t)\|_{L^2}^2$, so
$$
\|u\|_{C^0\dot{H}^1([0,T) \times \mathbb{S}^2)} + \|\partial_t u\|_{C^0L^2([0,T) \times \mathbb{S}^2)} \leq \frac{E(u(0)) + \alpha \log C}{1-\alpha}.
$$
On the other hand, if $\alpha < 0$, since Jensen's inequality implies
$$
\log \left(\fint_{\mathbb{S}^2} e^{2(u-\bar{u})} \right) \geq 0,
$$
we have
$$
\|u\|_{C^0\dot{H}^1([0,T) \times \mathbb{S}^2)} + \|\partial_t u\|_{C^0L^2([0,T) \times \mathbb{S}^2)} \leq E(u(0)).
$$ 
In both cases, $\|u\|_{C^0\dot{H}^1([0,T) \times \mathbb{S}^2)} + \|\partial_t u\|_{C^0L^2([0,T) \times \mathbb{S}^2)}$ is bounded by a constant that depends only on $\alpha$ and the norms of the initial data. This completes the proof.
\end{proof}

\begin{proof}[Proof of Theorem~\ref{thm:globalexistenceeven}]
The proof of Theorem~\ref{thm:globalexistenceeven} is the same as that of Theorem~\ref{thm:globalexistence}, except one uses the improved Moser-Trudinger inequality (\ref{eq:logMTeven}) in place of (\ref{eq:logMT}), which is possible since if $u_0$ and $u_1$ are even, then $u(t)$ remains even as long as the solution exists. We omit the details.
\end{proof}

\begin{proof}[Proof of Theorem~\ref{thm:globalLsys}]
This time we need the Moser-Trudinger inequality for systems apart from conservation of energy. Suppose $A$ and $M$ are as in the statement of the theorem. We only need to show that if for some $T > 0$, $u \in C^0\dot{H}^1 \cap C^1L^2 ([0,T) \times \mathbb{S}^2)$ satisfy
$$
\partial_t^2 u - \Delta u = A \left(\frac{Me^{2u}}{\fint_{\mathbb{S}^2} e^{2u} } - M \right) \quad \text{on $[0,T) \times \mathbb{S}^2$}
$$
with initial data $u(0) = u_0$, $u_t(0) = u_1$, then
$$
\|u\|_{C^0\dot{H}^1([0,T) \times \mathbb{S}^2)} + \|\partial_t u\|_{C^0L^2([0,T) \times \mathbb{S}^2)} \leq B
$$
for some number $B$ that depends only on $A$, $M$, $\|u_0\|_{\dot{H}^1}$ and $\|u_1\|_{L^2}$. Now conservation of energy says that 
$$
E(u(t)) = \fint_{\mathbb{S}^2} \sum_{i,j=1}^N a^{ij} ((\partial_t u_i)(\partial_t u_j) + (\nabla u_i, \nabla u_j)_g) - \sum_{i=1}^N M_i \log \left( \fint_{\mathbb{S}^2}  e^{2(u_i-\bar{u}_i)} \right) 
$$
is conserved over time. But by Proposition~\ref{prop:logMTsystempositive}, there exists some $\varepsilon > 0$ and $C$, depending only on $A$ and $M$, such that at any time $t \in [0,T)$,
$$
\fint_{\mathbb{S}^2} \sum_{i,j=1}^N a^{ij} (\nabla u_i, \nabla u_j)_g - \sum_{i=1}^N M_i \log \left( \fint_{\mathbb{S}^2}  e^{2(u_i-\bar{u}_i)} \right) 
\geq \varepsilon \fint_{\mathbb{S}^2} |\nabla u|^2 - C.
$$
Since $A$ (and hence $A^{-1}$) is positive definite, this implies $$E(u(t)) \geq \varepsilon' \fint_{\mathbb{S}^2} (|\partial_t u|^2 + |\nabla u|^2) - C$$
for some $\varepsilon' > 0$. By conservation of energy, this proves that $$\|\partial_t u(t)\|_{L^2(\mathbb{S}^2)} + \|\nabla u(t)\|_{L^2(\mathbb{S}^2)}$$ is bounded uniformly in $t$ by some constant that depends only on $A$, $M$ and the norms of the initial data. This completes our proof. 
\end{proof}

\begin{proof}[Proof of Theorem~\ref{thm:Lblowup}]
Suppose $1 \leq \alpha < 2$. First, if the solution $u$ in Theorem~\ref{thm:localexistence} fails to exist globally in time, say $u$ only exists on the time interval $[0,T)$ where $T > 0$ is finite, then 
\begin{equation} \label{eq:CMonsphere}
\limsup_{t \to T} |CM(t)| = 1,
\end{equation} where $CM(t)$ is the center of mass of $u$ at time $t$. This is because otherwise one can find a sequence of times $t_i < T$, $t_i \to T$ such that $\lim_{i \to \infty} |CM(t_i)| < 1,$ which implies via Aubin's result that equation (\ref{eq:logMTCM}) holds along this sequence of times $t_i$ for some $\frac{1}{2} < \mu < \frac{1}{\alpha}$ and some finite constant $C$. (Such $\mu$ exists since $1 \leq \alpha < 2$.) Now since the energy 
$$
E(u(t_i)) = \fint_{\mathbb{S}^2} (|\partial_t u(t_i)|^2 + |\nabla u(t_i)|^2) - \alpha \log \left( \fint_{\mathbb{S}^2} e^{2(u(t_i)-\bar{u}(t_i))} \right) 
$$
is independent of $t_i$, this shows that
$$
\fint_{\mathbb{S}^2} \left(|\partial_t u(t_i)|^2 + (1-\alpha \mu)|\nabla u(t_i)|^2\right) 
$$
has a uniform upper bound independent of $t_i$. Since $1-\alpha\mu > 0$, the same holds for $\|\partial_t u(t_i)\|_{L^2} + \|\nabla u(t_i)\|_{L^2}$, and by Theorem~\ref{thm:localexistence} this proves that the solution extends past $t_i$ for a fixed amount of time for all $i$. This contradicts that $T$ is the maximal time of existence of the solution, and this proves (\ref{eq:CMonsphere}).

Next, fix for the moment on a sequence $t_i \to T$ from below such that $$\lim_{i \to \infty} |CM(t_i)| = 1.$$ We claim that by passing to a subsequence, which we still denote as $t_i$, we have $$\lim_{i \to \infty} \int_{S^2} e^{2u(t_i)} = \infty.$$ This is because otherwise we have $$\limsup_{i \to \infty} (\|\partial_t u(t_i)\|_{L^2} + \|\nabla u(t_i)\|_{L^2}) \lesssim E(u(0)) + \alpha \limsup_{i \to \infty} \fint_{\mathbb{S}^2} e^{2u(t_i)} + \alpha \bar{u}(0) < \infty$$ by conservation of energy and conservation of $\bar{u}$, so one can extend the solution beyond $T$, contradicting the maximality of $T$. Thus we have the second assertion of the theorem. From this and the Moser-Trudinger inequality (\ref{eq:logMT}) we obtain the third assertion, namely $$\lim_{i \to \infty} \|\nabla u(t_i)\|_{L^2} = \infty$$ since $\bar{u}$ is constant.

Finally, suppose $\alpha = 1$, and $t_i$ is the above chosen subsequence. Let $$m_i = \fint_{S^2} e^{2u(t_i)}$$ and let $v_i(x) = u(x,t_i) - \frac{1}{2} \log m_i$. We will apply Chang-Yang's concentration lemma (Proposition~\ref{lem:CY}) to this sequence of functions $v_i$. First we observe that $$\fint_{S^2} e^{2v_i} = 1$$ for all $i$ by definition of $v_i$. Next we check that $$\limsup_{i \to \infty} \|\nabla v_i\|_{L^2} = \infty.$$ But since $u(t_i)$ and $v_i$ differ by only a constant, this is true by the analogous property of $u(t_i)$. Finally we check that $S[v_i] \leq C$ uniformly in $i$. But $S[v_i] = \fint_{S^2} |\nabla v_i|^2 + 2 \fint_{S^2} v_i$,  $\nabla v_i = \nabla u(x,t_i)$, and $\fint_{S^2} v_i = \fint_{S^2} u(x,t_i) - \frac{1}{2} \log m_i.$ It follows that 
\begin{align*}
S[v_i] &= \fint_{S^2} |\nabla u(x,t_i)|^2 dx - \log \left(\fint_{S^2} e^{2u(t_i)-\bar{u}(t_i)} \right) \\
&= E(u(t_i)) - \fint_{\mathbb{S}^2} |\partial_t u(t_i)|^2 \\
&\leq E(u(0)).
\end{align*}
The second equality holds because now $\alpha = 1$. As a result, Chang-Yang's concentration lemma applies. But we already knew previously that $\lim_{i \to \infty} \|\nabla v_i\|_{L^2} = \lim_{i \to \infty} \|\nabla u(t_i)\|_{L^2} = \infty$. Thus we get the existence of some $p \in \mathbb{S}^2$ and a subsequence of $t_i$ (which we still denote by $t_i$) such that for any $\varepsilon > 0$, $$\lim_{i \to \infty} \frac{1}{4\pi} \int_{B(p,\varepsilon)} e^{2v_i} \geq 1-\varepsilon.$$ Writing out the definition of $v_i$, we get the last assertion in Theorem~\ref{thm:Lblowup}.
\end{proof}

\section{Wave CMC equation}

We now turn to the wave CMC equation (\ref{eq:waveCMC}).

\subsection{Time-independent variational estimates}

In this subsection $u$ will be a map from $\mathbb{R}^2$ into $\mathbb{R}^3$ independent of time. The script $\mathcal{E}$ will be the stationary energy, i.e. 
\begin{equation} \label{eq:Sob}
\mathcal{E}(u) := \int_{\mathbb{R}^2} \frac{1}{2} |\nabla u|^2 + \frac{2}{3} u \cdot (u_x \wedge u_y) dx,
\end{equation}
if $u$ is a $\dot{H}^{1}$ function on $\mathbb{R}^2$ taking values in $\mathbb{R}^3$. Here we wrote $dx$ for the Lebesgue measure on $\mathbb{R}^2$ (instead of $dxdy$), and we will adapt this notation throughout. We have the following Sobolev inequality:

\begin{lem}
If $u$ is a $\dot{H}^{1}$ function on $\mathbb{R}^2$ taking values in $\mathbb{R}^3$, then 
$$
\left| \int_{\mathbb{R}^2} u \cdot (u_x \wedge u_y) dx \right| \leq C \|\nabla u\|_{L^2}^3.
$$ 
Furthermore, any ground state solution $W$ to (\ref{eq:CMC}) (as described in Section~\ref{sect:intro}) is a minimizer of this Sobolev inequality.
\end{lem}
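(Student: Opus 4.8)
The plan is to prove the Sobolev inequality first, and then verify that the ground states $W$ achieve equality (up to the optimal constant). For the inequality itself, I would argue by the compensated-compactness / Wente mechanism already alluded to in the Introduction: if $u \in \dot H^1(\mathbb{R}^2;\mathbb{R}^3)$, then each component of the vector field $u_x \wedge u_y$ is a sum of Jacobian-type determinants $\partial_x a\, \partial_y b - \partial_y a\, \partial_x b$ with $a,b \in \dot H^1$, so by the div-curl structure $u_x \wedge u_y$ has components in the real Hardy space $\mathcal{H}^1(\mathbb{R}^2)$, with $\|u_x \wedge u_y\|_{\mathcal{H}^1} \le C\|\nabla u\|_{L^2}^2$. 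Since $\dot H^1(\mathbb{R}^2)$ embeds into $BMO(\mathbb{R}^2)$ with $\|u\|_{BMO} \le C\|\nabla u\|_{L^2}$, the Fefferman $\mathcal{H}^1$–$BMO$ duality gives
$$
\left| \int_{\mathbb{R}^2} u \cdot (u_x \wedge u_y)\, dx \right| \le C\, \|u\|_{BMO}\, \|u_x \wedge u_y\|_{\mathcal{H}^1} \le C\, \|\nabla u\|_{L^2}^3,
$$
which is the asserted bound. (Strictly, one first establishes this for $u$ smooth with compact support and then extends by density, using that both sides are continuous in the $\dot H^1$ topology — the cubic nonlinearity is continuous on $\dot H^1$ by the same estimate applied to differences.)

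For the second assertion, introduce the best constant $S = \inf\{ \|\nabla u\|_{L^2}^3 : u \in \dot H^1,\ |\int u\cdot(u_x\wedge u_y)\,dx| = 1\}$, equivalently $S = \inf_{u \text{ non-constant}} \|\nabla u\|_{L^2}^3 / |\int u\cdot(u_x\wedge u_y)\,dx|$, and show the $W$'s realize it. The cleanest route is to invoke the cited classification: by Brezis–Coron \cite{MR784102} and Caldiroli–Musina \cite{MR2221202} (Lemma 2.1), the minimizers of this quotient are exactly the maps $\pi(P/Q) + C$ with $\max\{\deg P, \deg Q\} = 1$, i.e. precisely the ground states $W$, so the statement is immediate. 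If one wishes to be more self-contained, I would instead argue directly: for any $u$, the Euler–Lagrange equation of the quotient is $\Delta u = \lambda\, u_x \wedge u_y$ for a Lagrange multiplier $\lambda$, and scaling $u$ (which leaves the quotient invariant) normalizes $\lambda = 2$, so every minimizer solves (\ref{eq:CMC}); among all $\dot H^1$ solutions of (\ref{eq:CMC}), the value of the quotient is $\|\nabla u\|_{L^2}^3 / |\int u \cdot (u_x\wedge u_y)\,dx|$, and using that for $u = \pi(P/Q) + C$ one has $\|\nabla u\|_{L^2}^2 = 8\pi d$ with $d = \max\{\deg P,\deg Q\}$ and $\int u\cdot(u_x\wedge u_y)\,dx = \pm\tfrac{4\pi}{3}\cdot\tfrac{3}{2}d$-type formula (the signed area swept, $= 4\pi d$ up to sign, since $\mathcal{E}(W) = 4\pi/3$ forces $\tfrac23 \int W\cdot(W_x\wedge W_y) = 4\pi/3 - 4\pi = -8\pi/3$ for $d=1$), one checks the quotient is monotone in $d$ and hence minimized at $d = 1$.

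The main obstacle is the rigorous justification of the existence of a minimizer and the identification step — i.e. ruling out that the infimum $S$ is not attained due to loss of compactness (bubbling, escape to infinity, or vanishing), which is exactly the scenario concentration-compactness is designed to handle on this energy-critical problem. This is precisely where the external references \cite{MR784102} and \cite{MR2221202} do the heavy lifting, and for the purposes of this paper I would simply cite them rather than reprove the classification; the only thing that then needs an independent check is the clean arithmetic $\mathcal{E}(W) = 4\pi/3$, $\|\nabla W\|_{L^2}^2 = 8\pi$ for $d = 1$, which is a direct computation with the stereographic map and is recorded as (\ref{eq:EW}) below.
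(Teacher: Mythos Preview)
Your proposal is correct and matches the paper's approach: the paper itself gives no detailed proof of this lemma, simply citing \cite{MR784102} and \cite{MR2221202}, after having sketched in the Introduction the same $\mathcal{H}^1$--$BMO$ duality argument you spell out (Jacobian structure $\Rightarrow$ $u_x\wedge u_y\in\mathcal{H}^1$, $u\in BMO$ by $\dot H^1\hookrightarrow BMO$, then Fefferman duality). Your additional self-contained sketch of the minimizer identification goes beyond what the paper does, but is consistent with it.
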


See \cite{MR784102} and \cite{MR2221202}. From now on we write $C$ for the best constant of the above inequality. Then
\begin{equation} \label{eq:West1}
\left|\int_{\mathbb{R}^2} W \cdot (W_x \wedge W_y) dx \right| = C \|\nabla W\|_{L^2}^3. 
\end{equation}
Also, from the equation 
$$
\Delta W = 2 W_x \wedge W_y,
$$
one easily deduces (by multiplying both sides by $W$ and integrating by parts) that
\begin{equation} \label{eq:West2}
\int_{\mathbb{R}^2} W \cdot (W_x \wedge W_y) dx = -\frac{1}{2} \|\nabla W\|_{L^2}^2.
\end{equation}
Together with (\ref{eq:West1}) we see that 
\begin{equation} \label{eq:Sobc}
C = \frac{1}{2 \|\nabla W\|_{L^2}}.
\end{equation}
Now let 
$$
f(\lambda) = \frac{1}{2} \lambda^2 - \frac{2}{3} C \lambda^3 \quad \text{for $\lambda > 0$}.
$$
Then the critical points of $f$ are $\lambda = 0$ and $(2C)^{-1} = \|\nabla W\|_{L^2}$. The function value of $f$ at the critical points are $f(0) = 0$ and 
\begin{align*}
f(\|\nabla W\|_{L^2}) 
&= \frac{1}{2} \|\nabla W\|_{L^2}^2 - \frac{2}{3} C \|\nabla W\|_{L^2}^3 \\
&= \frac{1}{2} \|\nabla W\|_{L^2}^2 - \frac{1}{3} \|\nabla W\|_{L^2}^2 \quad &&\text{by (\ref{eq:Sobc})}\\
&= \frac{1}{2} \|\nabla W\|_{L^2}^2 + \frac{2}{3} \int_{\mathbb{R}^2} W \cdot (W_x \wedge W_y) dx \quad &&\text{by (\ref{eq:West2})}\\
&= \mathcal{E}(W)
\end{align*}
Note incidentally that this also shows 
\begin{equation} \label{eq:EW}
\mathcal{E}(W) = \frac{1}{6} \|\nabla W\|_{L^2}^2 > 0.
\end{equation}
The graph of $f$ is thus as follows:

\begin{center}
 \begin{pspicture*}(-2,-2)(6,3)
   \psaxes[labels=none,ticks=none]{->}(0,0)(0,-2)(5.5,2)
   \psplot{0}{5.5}{x x mul 2 div x x mul x mul 9 div sub}
   \uput[180](0,1.5){$\mathcal{E}(W)$}          
   \uput[-90](3,0){$\|\nabla W\|_{L^2}$}
	\uput[0](5.5,0){$\lambda$}
	\uput[90](0,2){$f(\lambda)$}
   \psline[linestyle=dotted](0,1.5)(3,1.5) 
   \psline[linestyle=dotted](3,0)(3,1.5) 
 \end{pspicture*}
\end{center}

\begin{lem} \label{lem:gradugap}
For each $\delta > 0$, there exists $\bar{\delta} > 0$ such that if $u$ is a $\dot{H}^{1}$ function on $\mathbb{R}^2$ taking values in $\mathbb{R}^3$, with $\mathcal{E}(u) \leq (1-\delta) \mathcal{E}(W)$, then $$\|\nabla u\|_{L^2} > \|\nabla W\|_{L^2} + \bar{\delta} \quad \text{if $\|\nabla u\|_{L^2} > \|\nabla W\|_{L^2}$},$$ and $$\|\nabla u\|_{L^2} < \|\nabla W\|_{L^2} - \bar{\delta} \quad \text{if $\|\nabla u\|_{L^2} < \|\nabla W\|_{L^2}$}.$$
\end{lem}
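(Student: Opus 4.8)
The plan is to reduce the whole statement to a one-variable fact about the function $f(\lambda)=\frac12\lambda^2-\frac23 C\lambda^3$ already introduced above, using the Sobolev inequality of the preceding Lemma as the only analytic input. First I would note that for any $\dot H^1$ map $u\colon\mathbb{R}^2\to\mathbb{R}^3$ one has
$$\mathcal{E}(u)=\frac12\|\nabla u\|_{L^2}^2+\frac23\int_{\mathbb{R}^2}u\cdot(u_x\wedge u_y)\,dx\ \ge\ \frac12\|\nabla u\|_{L^2}^2-\frac23 C\|\nabla u\|_{L^2}^3=f(\|\nabla u\|_{L^2}),$$
since $\int u\cdot(u_x\wedge u_y)\,dx\ge-C\|\nabla u\|_{L^2}^3$. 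Hence, writing $\lambda:=\|\nabla u\|_{L^2}$, the hypothesis $\mathcal{E}(u)\le(1-\delta)\mathcal{E}(W)$ forces $f(\lambda)\le(1-\delta)\mathcal{E}(W)$, and everything becomes a question about the sublevel set $\{\lambda\ge0:\ f(\lambda)\le(1-\delta)\mathcal{E}(W)\}$.

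Next I would analyze $f$ on $[0,\infty)$. Since $f'(\lambda)=\lambda(1-2C\lambda)$ and $(2C)^{-1}=\|\nabla W\|_{L^2}$, the function $f$ increases on $[0,\|\nabla W\|_{L^2}]$ and decreases on $[\|\nabla W\|_{L^2},\infty)$, with $f(0)=0$, $f(\|\nabla W\|_{L^2})=\mathcal{E}(W)$, and $f(\lambda)\to-\infty$ as $\lambda\to\infty$. Because $\mathcal{E}(W)>0$ by (\ref{eq:EW}), the value $c:=(1-\delta)\mathcal{E}(W)$ lies strictly between $f(0)$ and the local maximum $f(\|\nabla W\|_{L^2})$. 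By strict monotonicity of $f$ on each side of $\|\nabla W\|_{L^2}$, the equation $f(\lambda)=c$ then has exactly two roots $0<\lambda_-(\delta)<\|\nabla W\|_{L^2}<\lambda_+(\delta)<\infty$, and
$$\{\lambda\ge0:\ f(\lambda)\le c\}=[0,\lambda_-(\delta)]\cup[\lambda_+(\delta),\infty).$$

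Finally I would set $\bar\delta:=\tfrac12\min\{\|\nabla W\|_{L^2}-\lambda_-(\delta),\ \lambda_+(\delta)-\|\nabla W\|_{L^2}\}$, which is strictly positive and depends only on $\delta$ (the ground state $W$ and the best constant $C$ being fixed). If $\|\nabla u\|_{L^2}>\|\nabla W\|_{L^2}$, then $\lambda$ lies in the sublevel set and is strictly larger than $\lambda_-(\delta)$, so $\lambda\ge\lambda_+(\delta)>\|\nabla W\|_{L^2}+\bar\delta$; symmetrically, if $\|\nabla u\|_{L^2}<\|\nabla W\|_{L^2}$, then $\lambda\le\lambda_-(\delta)<\|\nabla W\|_{L^2}-\bar\delta$. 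This is precisely the asserted gap.

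The argument is essentially soft, and I do not expect a genuine obstacle; the one point worth emphasizing is that the positivity of $\mathcal{E}(W)$ is exactly what makes $c=(1-\delta)\mathcal{E}(W)$ a strictly subcritical value of $f$, producing two distinct roots straddling $\|\nabla W\|_{L^2}$ and hence a quantitative separation. Everything else — continuity and monotonicity of $f$, and the reduction via the Sobolev inequality — is routine.
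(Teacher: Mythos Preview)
Your argument is correct and is essentially the paper's own proof, only written with more detail: both reduce via the Sobolev inequality to $f(\|\nabla u\|_{L^2})\le(1-\delta)f(\|\nabla W\|_{L^2})$ and then read off the gap from the shape of $f$. One harmless remark: your sentence ``$c$ lies strictly between $f(0)$ and the local maximum'' tacitly assumes $\delta<1$, but this is without loss of generality since the conclusion for any $\delta\ge1$ follows from the conclusion for, say, $\delta=\tfrac12$.
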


\begin{proof}
The assumption $\mathcal{E}(u) \leq (1-\delta) \mathcal{E}(W)$ says $$f(\|\nabla u\|_{L^2}) \leq (1-\delta) f(\|\nabla W\|_{L^2});$$ this is because the Sobolev inequality (\ref{eq:Sob}) implies
\begin{align*}
f(\|\nabla u\|_{L^2}) 
&= \frac{1}{2} \|\nabla u\|_{L^2}^2 - \frac{2}{3} C \|\nabla u\|_{L^2}^3 \\
&\leq \frac{1}{2} \|\nabla u\|_{L^2}^2 + \frac{2}{3} \int_{\mathbb{R}^2} u \cdot (u_x \wedge u_y) dx \quad &&\text{by (\ref{eq:Sob})}\\
&= \mathcal{E}(u)
\end{align*}
which by our hypothesis is bounded by $(1-\delta) \mathcal{E}(W) = (1-\delta) f(\|\nabla W\|_{L^2})$. Hence by continuity of $f$, one can find some $\bar{\delta}$ such that $$\left|\|\nabla u\|_{L^2} - \|\nabla W\|_{L^2}\right| > \bar{\delta},$$ which implies the desired conclusion since $f$ is strictly increasing from $0$ to $\|\nabla W\|_{L^2}$, and strictly decreasing from $\|\nabla W\|_{L^2}$ to $+\infty$. (c.f. graph above.)
\end{proof}

\subsection{Finite time blow up}

An easy corollary of Lemma~\ref{lem:gradugap} is the following:

\begin{prop} \label{prop:gradugaptimet}
Under conditions of Theorem~\ref{thm:finitetimeblowup}, 
$$
\|\nabla u(t,x)\|_{L^2(dx)} > \|\nabla W\|_{L^2(dx)}
$$ 
for all $t \in [0,T)$.
\end{prop}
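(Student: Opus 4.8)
The plan is to argue by contradiction using the conservation of energy together with the gap established in Lemma~\ref{lem:gradugap}. Suppose that for some $t_* \in [0,T)$ we have $\|\nabla u(t_*,\cdot)\|_{L^2(dx)} \leq \|\nabla W\|_{L^2}$. The idea is that the hypotheses of Theorem~\ref{thm:finitetimeblowup} force the spatial energy $\|\nabla u(t,\cdot)\|_{L^2}$ to stay strictly above $\|\nabla W\|_{L^2}$, so that it cannot drop to (or below) $\|\nabla W\|_{L^2}$ at any later time, and in particular not at $t=0$ either — but this contradicts the assumption $\|\nabla u_0\|_{L^2} > \|\nabla W\|_{L^2}$. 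More precisely, I would first record the two things we may assume: the conserved energy satisfies $E(u(t)) = E(u_0,u_1)$ for all $t$, and (by the first displayed hypothesis) $E(u_0,u_1) < E(W,0) = \mathcal{E}(W)$, so there is a $\delta > 0$ with $E(u_0,u_1) \leq (1-\delta)\mathcal{E}(W)$.

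Next, I would note that for each fixed $t$, the stationary energy $\mathcal{E}(u(t,\cdot))$ of the time-slice is controlled by the full conserved energy: indeed
\[
\mathcal{E}(u(t,\cdot)) = E(u(t)) - \int_{\mathbb{R}^2} \tfrac{1}{2}|\partial_t u(t,x)|^2\, dx \leq E(u(t)) = E(u_0,u_1) \leq (1-\delta)\mathcal{E}(W).
\]
Thus at every time $t \in [0,T)$ the time-slice $u(t,\cdot)$ is a $\dot{H}^1$ map from $\mathbb{R}^2$ to $\mathbb{R}^3$ satisfying the hypothesis of Lemma~\ref{lem:gradugap}. Consequently, for every $t$, the quantity $\|\nabla u(t,\cdot)\|_{L^2}$ cannot lie in the closed interval $[\|\nabla W\|_{L^2} - \bar\delta,\ \|\nabla W\|_{L^2} + \bar\delta]$; it is either $> \|\nabla W\|_{L^2} + \bar\delta$ or $< \|\nabla W\|_{L^2} - \bar\delta$.

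Finally, I would invoke continuity in time. Since $u \in C^0_t \dot{H}^1_x$ (so $t \mapsto \|\nabla u(t,\cdot)\|_{L^2}$ is continuous on $[0,T)$) and this continuous function omits the nonempty open interval $(\|\nabla W\|_{L^2} - \bar\delta,\ \|\nabla W\|_{L^2} + \bar\delta)$, it must remain on one side of $\|\nabla W\|_{L^2}$ for all $t \in [0,T)$. At $t=0$ the hypothesis $\|\nabla u_0\|_{L^2} > \|\nabla W\|_{L^2}$ puts it on the upper side, so $\|\nabla u(t,\cdot)\|_{L^2} > \|\nabla W\|_{L^2} + \bar\delta > \|\nabla W\|_{L^2}$ for all $t \in [0,T)$, which is exactly the claim. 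I expect the only subtle point to be the continuity/connectedness step — making sure one is entitled to conclude the trajectory stays on one side — but this is immediate from $u \in C^0_t\dot H^1_x$ and the intermediate value theorem, since crossing from one side to the other would require passing through the forbidden interval. Everything else is a direct substitution into the conserved energy and an application of the preceding lemma.
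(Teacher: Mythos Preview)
Your proof is correct and follows essentially the same approach as the paper: both use energy conservation to get $\mathcal{E}(u(t,\cdot)) \leq (1-\delta)\mathcal{E}(W)$ for all $t$, invoke Lemma~\ref{lem:gradugap} to obtain a gap around $\|\nabla W\|_{L^2}$, and then use continuity of $t\mapsto\|\nabla u(t,\cdot)\|_{L^2}$ together with the initial condition to conclude. Your version is slightly more explicit in isolating the inequality $\mathcal{E}(u(t,\cdot)) \leq E(u(t))$, and you phrase the continuity step as a connectedness/intermediate-value argument rather than the paper's ``smallest $t_0$'' argument, but these are cosmetic differences.
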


\begin{proof}
Recall that $\|\nabla u(t,x)\|_{L^2(dx)}$ is a continuous function of $t$, $E(W,0) = \mathcal{E}(W)$, and by energy conservation, for any time $t \in [0,T)$, we have $$E(u(t),u_t(t)) \leq (1-\delta) \mathcal{E}(W)$$ for some $\delta > 0$ independent of $t$. If $\|\nabla u(t,x)\|_{L^2(dx)} \leq \|\nabla W\|_{L^2(dx)}$ for some $t \in [0, T)$, let $t_0$ be the smallest value of $t$ that verifies this. Then $t_0 > 0$, and for any $t < t_0$, we have $\|\nabla u(t,x)\|_{L^2(dx)} \geq \|\nabla W\|_{L^2(dx)} + \bar{\delta}$ where $\bar{\delta} > 0$ is as in Lemma~\ref{lem:gradugap}. Letting $t \to t_0^-$, we arrive at a contradiction.
\end{proof}

\begin{proof}[Proof of Theorem~\ref{thm:finitetimeblowup}.]
Let $u$ be as in Theorem~\ref{thm:finitetimeblowup}, and let
$$
y(t) = \|u(t,x)\|_{L^2(dx)}^2
$$
for $t \in [0,T)$. Then 
$$
y'(t) = \int_{\mathbb{R}^2} 2 u \cdot u_t dx
$$
and
$$
y''(t) = \int_{\mathbb{R}^2} 2 |u_t|^2 - 2 |\nabla u|^2 - 4 u \cdot (u_x \wedge u_y) dx.
$$
These calculations are justified since $u$ is assumed to be smooth, and $u$ has compact support on each time slice. Suppose now $E(u_0,u_1) \leq E(W,0) - \varepsilon$ for some $\varepsilon > 0$. Then by conservation of energy,
$$
\int_{\mathbb{R}^2} -4 u \cdot (u_x \wedge u_y) dx \geq \int_{\mathbb{R}^2} 3 (|u_t|^2 + |\nabla u|^2) dx - 6 E(W,0) + 6 \varepsilon.
$$ 
This implies
\begin{align*}
y''(t) 
& \geq \int_{\mathbb{R}^2} (5 |u_t|^2 + |\nabla u|^2 - 3 |\nabla W|^2 - 4 W \cdot (W_x \wedge W_y)) dx + 6 \varepsilon \\
& = \int_{\mathbb{R}^2} (5 |u_t|^2 + |\nabla u|^2 - |\nabla W|^2) dx + 6 \varepsilon \qquad \text{by (\ref{eq:West2})} \\
& \geq 5 \|u_t(t,x)\|_{L^2(dx)}^2 + 6 \varepsilon \qquad \text{by Proposition~\ref{prop:gradugaptimet}}.
\end{align*}
In particular, $y'(t) > 0$ for all sufficiently large $t$, say for all $t > t_1$. Then for all $t > t_1$,
$$
y(t) y''(t) \geq \frac{5}{4} y'(t)^2,
$$
which implies (since $y'(t) > 0$) that
$$
\frac{y''(t)}{y'(t)} \geq \frac{5}{4} \frac{y'(t)}{y(t)}.
$$
It follows that
$$
(\log y'(t))' \geq \frac{5}{4} (\log y(t))',
$$
which implies that
$$
y'(t) \geq C y(t)^{5/4}
$$
for all $t > t_1$ where $C > 0$. Hence $y(t)$ becomes infinite in finite time, and therefore $T$ cannot be infinite.
\end{proof}

\bibliography{Liouville}

\end{document}